\newtheorem{prop}{Proposition}[section]
\newtheorem{theorem}{Theorem}[section]
\newtheorem{cor}{Corollary}[section]
\begin{document}

\pagenumbering{roman}

\thispagestyle{empty}

\title{On the $ER(2)$-cohomology of some odd-dimesional projective spaces}
\author{Romie Banerjee}
\date{}
\maketitle

\begin{abstract}
 Kitchloo and Wilson have used the homotopy fixed points spectrum $ER(2)$ of the classical complex-oriented Johnson-Wilson spectrum $E(2)$ to deduce certain non-immmersion results for real projective spaces. $ER(n)$ is a $2^{n+2}(2^n-1)$-periodic spectrum. The key result to use is the existence of a stable cofibration $\Sigma^{\lambda(n)}ER(n) \rightarrow ER(n) \rightarrow E(n)$ connecting the real Johnson-Wilson spectrum with the classical one. The value of $\lambda(n)$ is $2^{2n+1}-2^{n+2}+1$. We extend Kitchloo-Wilson's results on non-immersions of real projective spaces by computing the second real Johnson-Wilson cohomology $ER(2)$ of the odd-dimensional real projective spaces $RP^{16K+9}$. This enables us to solve certain non-immersion problems of projective spaces using obstructions in $ER(2)$-cohomology. 

\vspace{3mm}
{\bf Keywords:} Johnson-Wilson theory, homotopy fixed points

{\bf AMS Subject Classification:} 55N20, 55N22, 55N91
\end{abstract}

\section{Introduction}

The spectrum $MU$ of complex cobordism comes naturally equipped with an action of $\mathbb{Z}/2$ by complex conjugation. Hu and Kriz in \cite{HK} have used this action to construct genuine $\mathbb{Z}/2$ equivariant spectra $E\mathbb{R}(n)$ from the complex-oriented spectra $E(n)$. Kitchloo and Wilson in \cite{KW1} have used the homotopy fixed point spectrum of this to solve certain non-immersion problems of real projective spaces. The homotopy fixed point spectrum $ER(n)$ is $2^{n+2}(2^n-1)$-periodic compared to the $2(2^n-1)$-periodic $E(n)$. The spectrum $ER(1)$ is $KO_{(2)}$ and $E(1)$ is $KU_{(2)}$.

Kitchloo and Wilson have demonstrated the existence of a stable cofibration connecting $E(n)$ and $ER(n)$,
\begin{equation}
\begin{CD}
\Sigma^{\lambda(n)} ER(n) @>x>> ER(n) @>>> E(n)
\end{CD}
\end{equation}
where $\lambda(n) = 2^{2n+1}-2^{n+2}+1$.
This leads to a Bockstein spectral sequence for $x$-torsion. It is known that $x^{2^{n+1}-1}=0$ so there can be only $2^{n+1}-1$ differentials. For the case of our interest $n=2$ there are only 7 differentials.

From \cite{James} we know that if there is an immersion of $RP^b$ to $\mathbb{R}^c$ then there is an axial map
\begin{equation}
RP^b \times RP^{2^L-c-2} \rightarrow RP^{2^L-b-2}.
\end{equation}
For $b=2n$ and $c=2k$ Don Davis shows in \cite{Davis84} that there is no such map when $n=m+\alpha(m)-1$ and $k=2m-\alpha(m)$, where $\alpha(m)$ is the number of ones in the binary expression of $m$ by finding an obstruction to James's map (2) in $E(2)$-cohomology. Kitchloo and Wilson get new non-immersion results by computing obstructions in $ER(2)$-cohomology. In this paper we extend Kitchloo-Wilson's results by computing the $ER(2)$-cohomology of the odd projective space $RP^{16K+9}$. This will give us newer non-immersion results. The main results are the following.

\begin{theorem}
A 2-adic basis of $ER(2)^{8*}(RP^{16K+9},*)$ is given by the elements
$$\alpha^ku^j, \,\, (k \geq 0,1\le j \le 8K+4)$$
$$v_2^4\alpha^k u^j, \,\, (k \geq 1, 1\leq j \leq 8K+4)$$
$$v_2^4u^j,\,\,(4 \leq j \leq 8K+4)$$
$$x \alpha^ki_{16K+9}, \,\, xv_2^4\alpha^k i_{16K+9},\,\,(k \geq 0)$$
\end{theorem}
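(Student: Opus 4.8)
The plan is to compute $\widetilde{ER(2)}^{\,8*}(RP^{16K+9})$ by running the Bockstein spectral sequence for $x$-torsion attached to the cofibration (1), feeding in the classical value of $E(2)^*(RP^{16K+9})$ and reading off the surviving classes. Two structural features make this finite: the $48$-periodicity of $ER(2)$ collapses the target to the six degrees $\widetilde{ER(2)}^{\,8j}$, $0 \le j \le 5$, and the relation $x^7 = 0$ limits the spectral sequence to at most seven differentials.

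First I would record the input. Regarding $RP^{16K+9}$ as the total space of the circle bundle $S^1 \to RP^{16K+9} \to \mathbb{C}P^{8K+4}$ obtained from the square of the tautological bundle, the Gysin sequence in the complex-oriented theory $E(2)$ has Euler class the $2$-series $[2](u)$ of the formal group law. Since $[2](u)$ has leading term $2u$ and $u^{8K+5}=0$ on $\mathbb{C}P^{8K+4}$, the reduced cohomology $\widetilde{E(2)}^*(RP^{16K+9})$ is spanned over $E(2)^*$ by the powers $u^j$, $1 \le j \le 8K+4$ (the cokernel of multiplication by $[2](u)$ modulo $u^{8K+5}$), together with a top-cell class $i_{16K+9}$ coming from the kernel, which is free of rank one on $u^{8K+4}$. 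This module is the $E_1$-page of the Bockstein spectral sequence.

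Next I would run the spectral sequence. I expect each $u^j$, $1 \le j \le 8K+4$, to survive as an $ER(2)^*$-module generator and therefore to carry the full $\alpha$-tower $\alpha^k u^j$ inherited from the coefficient ring, while the differentials, computed as a derivation from the Kitchloo--Wilson formula and combined with the relations already present in $ER(2)^*$, force the asymmetric families $v_2^4\alpha^k u^j$ for $k \ge 1$ and $v_2^4 u^j$ for $j \ge 4$. The top class $i_{16K+9}$ should be a permanent cycle generating a free $ER(2)^*$-summand, and its contribution to the degree-$8*$ part is then fixed by a degree count alone: since $|x^m\alpha^k i_{16K+9}| \equiv 1-m \pmod 8$, among the powers allowed by $x^7=0$ only $m=1$ lands in $\widetilde{ER(2)}^{\,8*}$, producing exactly $x\alpha^k i_{16K+9}$ and $xv_2^4\alpha^k i_{16K+9}$, $k \ge 0$.

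The hard part will be the middle step: pinning down the higher Bockstein differentials precisely enough to establish the $v_2^4$-asymmetry --- that $v_2^4 u^1, v_2^4 u^2, v_2^4 u^3$ are absent while $v_2^4 u^j$ occurs for $j \ge 4$, and that the $k=0$ level of the $v_2^4\alpha^k u^j$ family is missing. This forces one to track how multiplication by $v_2$ shifts the $x$-filtration across successive differentials, rather than merely locating permanent cycles. Upgrading the resulting associated graded to an honest $2$-adic basis --- resolving the additive extensions and verifying $2$-adic convergence --- is the remaining and more routine difficulty, and the $u^j$-families can be cross-checked against their restrictions to the even subspace $RP^{16K+8}$.
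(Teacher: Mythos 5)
Your overall framework (Bockstein spectral sequence for $x$-torsion with $E^1=E(2)^*(RP^{16K+9},*)$, reduction to degrees mod $48$, the splitting of the $E^1$-term into the $u^j$-part and a top class $i_{16K+9}$) matches the paper. But your treatment of the top class contains a false step that removes exactly the content the paper has to work for. You assert that $i_{16K+9}$ ``should be a permanent cycle generating a free $ER(2)^*$-summand,'' so that its contribution to degree $8*$ follows from the relation $x^7=0$ and a degree count. This is not true, and it cannot be true: the whole point of this computation (Proposition 3.1 of the paper) is that the cofibration $RP^{16K+8}\to RP^{16K+9}\to S^{16K+9}$ does \emph{not} split in $ER(2)$-cohomology, which is why $u^{8K+5}$ survives to $RP^{16K+9}$ and why one gets non-immersion results unavailable to the complex-oriented theory. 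Concretely, the paper proves that $i_{16K+9}$ is $x^4$-torsion (via $d^4(v_2^{\{6,2\}}u^{8K+3})=v_2^{\{0,4\}}i_{16K+9}$, established by chasing $x^3z_{16K-8}$ through the diagram comparing $RP^{16K+10}/RP^{16K+8}$, $RP^{16K+10}$ and $RP^{16K+9}$ and using $x^3u^4=0$), and that $x^2\alpha^4 i_{16K+9}=0$ while $x^2\alpha i_{16K+9}\neq 0$ (which forces $d^2(v_2^{2s+1}\alpha^k u^{8K+4})=v_2^{2s+2}\alpha^{k+4}i_{16K+9}$, using $u^{8K+7}=x^4z_{16K+4}$ in $RP^{16K+10}$). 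Since $x^2\alpha^4\neq 0$ and $x^4\neq 0$ in $ER(2)^*$, neither relation can hold in a free module. These odd-degree differentials $d^2$ and $d^4$, crossing from the $RP^{16K+8}$ part to the $S^{16K+9}$ part, are the technical heart of the section and are invisible in your outline; the external input needed to compute them is the known structure of $ER(2)^*(RP^{16K+10},*)$, which you never invoke.

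The error also shows up in the answer itself. On a free summand the degree-$8*$ elements supported on $i_{16K+9}$ would be $x\alpha^k i_{16K+9}$ and $xw\alpha^k i_{16K+9}=xv_2^4\alpha^{k+1}i_{16K+9}$, $k\ge 0$ --- which misses $xv_2^4 i_{16K+9}$ (the $k=0$ member of the second family), because $v_2^4$ does not lift to $ER(2)^*$. That element exists only because $v_2^4 i_{16K+9}$ is itself a target of $d^4$, i.e.\ a genuinely new $x^4$-torsion module generator, not an $ER(2)^*$-multiple of $i_{16K+9}$. Relatedly, you locate the ``hard part'' in the $v_2^4$-asymmetry of the $u^j$-families, but that part is essentially inherited from the known spectral sequences for $RP^\infty$ and $RP^{16K+8}$ ($d^7(v_2^4u^{\{1-3\}})=u^{\{1-3\}}$, etc.); the genuinely hard part is the one you dismissed. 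To repair the proposal you would need to replace the freeness claim by the determination of $d^2$ and $d^4$ via the comparison with $RP^{16K+10}$.
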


\begin{theorem}
Let $\alpha(m)$ be the number of ones in the binary expansion of $m$. If $(m,\alpha(m)) \equiv$ (6,2) or (1,0) mod 8,

$RP^{2(m+\alpha(m)-1)}$ does not immerse in $\mathbb{R}^{2(2m -\alpha(m))+1}$.
\end{theorem}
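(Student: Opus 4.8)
The plan is to assume the immersion exists, run it through James's axial map (2), and obtain a contradiction by evaluating the resulting map on $ER(2)$-cohomology against the explicit basis of Theorem 1, in the spirit of Davis \cite{Davis84} and Kitchloo--Wilson \cite{KW1}.

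First I set $b=2(m+\alpha(m)-1)$ and $c=2(2m-\alpha(m))+1$. If $RP^{b}$ immerses in $\mathbb{R}^{c}$, then by (2) there is, for every sufficiently large $L$, an axial map
\[
f\colon RP^{b}\times RP^{\,2^{L}-c-2}\longrightarrow RP^{\,2^{L}-b-2}.
\]
Since $c$ is odd and $b$ is even, the middle factor is odd-dimensional and the target is even-dimensional. The odd factor has dimension $2^{L}-c-2=2^{L}-4m+2\alpha(m)-3$, which for $L\ge 4$ is $\equiv 9\pmod{16}$ exactly when $\alpha(m)-2m\equiv 6\pmod 8$. Both congruence hypotheses $(m,\alpha(m))\equiv(6,2)$ and $(m,\alpha(m))\equiv(1,0)\pmod 8$ satisfy $\alpha(m)-2m\equiv 6\pmod 8$, so the odd factor is $RP^{16K+9}$ for a suitable $K$, and Theorem 1 applies to it.

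Next I apply $ER(2)^{*}(-)$. The axial map classifies the external tensor product of the two tautological real line bundles, so it sends the orientation class $u$ of the target to the formal sum $f^{*}u=u_{1}+_{F}u_{2}$, where $F$ is the height-two formal group law of $E(2)$ and $u_{1},u_{2}$ are the orientation classes of the two factors. The even factor $RP^{b}$ and the target have their $ER(2)$-cohomology from Kitchloo--Wilson, while $ER(2)^{*}(RP^{16K+9})$ is given by Theorem 1; there $u_{2}$ is nilpotent with top power $u_{2}^{8K+4}$, and the top odd cell is detected by the $x$-torsion classes $x\alpha^{k}i_{16K+9}$, $xv_{2}^{4}\alpha^{k}i_{16K+9}$. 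A first technical point is to establish the Künneth description of $ER(2)^{*}(RP^{b}\times RP^{16K+9})$: because the odd factor carries $x$-torsion this is not the plain tensor product, and the $\mathrm{Tor}$ term must be controlled.

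The obstruction is then extracted as follows. In the even target one has $u^{N}=0$ for a suitable $N$ of size $(2^{L}-b-2)/2$, so $(u_{1}+_{F}u_{2})^{N}=0$ in the source. Expanding by the formal group law and reducing every monomial of $u_{2}$-exponent above $8K+4$ through the relations of Theorem 1, the $x$-torsion generators are brought into play. Reading off the coefficient of a single top monomial --- one built from the top class of $RP^{b}$ and the $x$-torsion class $x\,i_{16K+9}$, the feature absent from the purely even-dimensional analysis --- and using the multiplicative relations of Theorem 1, this coefficient collapses to a $2$-adic integer that is an explicit binomial-type expression in $m$ and $\alpha(m)$ and that must vanish. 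The \emph{main obstacle} is precisely to show that this coefficient is instead a $2$-adic unit under the stated hypotheses: this requires tracking how the $x$-torsion classes enter the non-additive formal-group expansion of $(u_{1}+_{F}u_{2})^{N}$ against the delicate $v_{2}^{4}$- and $x$-divisibility pattern of Theorem 1 (for instance, $v_{2}^{4}u^{j}$ occurs only for $j\ge 4$), and then a Kummer/Lucas carry count showing that the resulting binomial coefficient is odd exactly for the residues $(m,\alpha(m))\equiv(6,2)$ and $(1,0)\pmod 8$ among all those with $\alpha(m)-2m\equiv 6\pmod 8$. Verifying the Künneth splitting is a secondary, more routine difficulty.
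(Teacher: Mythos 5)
Your overall skeleton (James's axial map, $f^*u=u_1+_Fu_2$, and a vanishing power of $u$ in the target contradicted by a nonvanishing class in the source) is the paper's skeleton too, but the core of your plan --- extracting ``an explicit binomial-type expression in $m$ and $\alpha(m)$'' and proving it is a $2$-adic unit by a Kummer/Lucas carry count ``exactly for the residues $(m,\alpha(m))\equiv(6,2),(1,0)$'' --- misreads where those congruences come from and would not work. The nonvanishing of the relevant coefficient is not a new computation at all: it is Davis's theorem, quoted as a black box, that the image of $(u_1+_Fu_2)^{2^{L-1}-n}$ in $E(2)^{16*}(RP^{2n}\times RP^{16K+10})$ is nonzero, and this holds for \emph{all} $m$ with $n=m+\alpha(m)-1$, $k=2m-\alpha(m)$. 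The residues $(6,2)$ and $(1,0)$ mod $8$ are purely the simultaneous solution of two congruences you need for the setup: $k\equiv 2\pmod 8$ so that the odd factor is of the form $RP^{16K+9}$ (you derive this one), and $n\equiv 0$ or $7\pmod 8$ so that $u^{2^{L-1}-n}=0$ in $ER(2)^*(RP^{2^L-2n-2})$ by Kitchloo--Wilson (you never check or use this one; your ``$u^N=0$ for a suitable $N$'' hides exactly the point, since for $ER(2)$, unlike a complex-oriented theory, the nilpotence exponent of $u$ depends on the residue of the dimension mod $16$).

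The second, fatal, problem is your assertion that in $ER(2)^*(RP^{16K+9})$ the class $u_2$ ``is nilpotent with top power $u_2^{8K+4}$.'' The entire point of the paper --- Proposition 3.1 and Theorem 4.2 --- is that $u^{8K+5}\neq 0$ in $ER(2)^*(RP^{16K+9})$ (it is represented by $xv_2^4i_{16K+9}$), a phenomenon impossible for a complex-oriented theory and the only reason $ER(2)$ gives anything beyond Davis. The paper's actual argument is: expand $(u_1+_Fu_2)^{2^{L-1}-n}$, kill the pure powers, use $[2](u_1)=0$ and $u_1[2](u_2)-u_2[2](u_1)=0$ to reduce to a coefficient-free sum of monomials $\alpha^ku_1^iu_2$ and $u_1^iu_2^j$, kill the former via $u_1^{n+1}=0$, and then show the surviving monomials $u_1^iu_2^j$ ($i\le n$, $j\le 8K+5$) are linearly independent in $ER(2)^*(RP^{2n}\times RP^{16K+9})$; the decisive one is $u_1^nu_2^{8K+5}$, whose survival is established by the K\"unneth analysis of Section 4.1 (a $d^1$ computation on $z_{16K-33}$ via Gonz\'alez--Wilson), not by a coefficient identity. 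So the K\"unneth step you dismiss as ``secondary and routine'' is, together with Proposition 3.1, the actual content of the proof, while the step you identify as the main obstacle is already done by Davis.
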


This shall give us new non-immersions that are often new and different from those of \cite{KW1} and \cite{KW2}. Using Davis's table \cite{Davis} the first new result is $RP^{2^{13}-2}$ does not immerse in $\mathbb{R}^{2^{14}-59}$.

\vspace{3mm}
{\bf Acknowledgements}\, This paper came out of my PhD dissertation at Johns Hopkins. I owe many thanks to my advisor Steve Wilson and Michael Boardman.
\tableofcontents

\pagenumbering{arabic}

\section{The Bockstein spectral sequence}

The results obtained in this section can be found in \cite{KW1}. We reproduce it here for the convenience of the reader.

We have the stable cofibration 
$$\xymatrix{
\Sigma^{\lambda(n)}ER(n) \ar[r]^x &ER(n) \ar[r] &E(n)\\
}$$
where $x\in ER(n)^{-\lambda(n)}$ and $\lambda(n) = 2^{2n+1}-2^{n+2}+1$. The fibration gives us a long exact sequence 
\begin{equation}
\xymatrix{
ER(n)^*(X)  \ar[rr]^x &&ER(n)^*(X) \ar[dl]^{\rho}\\
&E(n)^*(X) \ar[ul]^{\partial}\\
}
\end{equation}
where $x$ lowers the degree by $\lambda(n)$ and $\partial$ raises the degree by $\lambda(n)+1$. This leads to the Bockstein spectral sequence, which will completely determine $M=ER(n)^*(X)/(x)$ as a subring of $E(n)^*(X)$. We know that $x^{2^{n+1}-1}=0$ so there can be only $2^{n+1}-1$ differentials. 

We filter $M$,
$$ 0= M_0 \subset M_1 \subset M_2 \subset \ldots \subset M_{2^{n+1}-1} =M$$ by submodules $$M_r = \hbox{Ker}\left[x^r:\frac{ER(n)^*(X)}{x} \rightarrow \frac{x^rER(n)^*(X)}{x^{r+1}}\right]$$ so that $M_r/M_{r-1}$ gives the $x^r$-torsion elements of $ER(n)^*(X)$ that are non-zero in $M$. 

We collect the basic facts about the spectral sequence in the following theorem. $E(n)$ is a complex oriented spectrum with a complex conjugation action. Denote this action by $c$.

\begin{theorem} \cite[Theorem 4.2]{KW1} In the Bockstein spectral sequence for $ER(n)^*(X)$
\begin{enumerate}
\item The exact couple (3) gives rise to a spectral sequence, $E^r$, of $ER(n)^*$-modules, starting with $$E^1\simeq E(n)^*(X).$$
\item $E^{2^{n+1}}=0$
\item $\hbox{Im}\, d^r \simeq M_r/M_{r-1}$. 
\item The degree of $d^r$ is $r\lambda(n)+1$.
\item $d^r(ab) = d^r(a)b +c(a)d^r(b)$
\item $d^1(z) = v_n^{-(2^n-1)}(1-c)(z)$ where $c(v_i)=-v_i$.
\item If $c(z)=z$ in $E^1$, then $d^1(z)=0$. If $c(z)=z$ in $E^r$ then $d^r(z^2)=0$.
\item The following are all vector spaces over $\mathbb{Z}/2$:
$$M_j/M_i,\, (j\geq i> 0)\,\, \hbox{and}\, E^r, (r\geq 2).$$ 
 
\end{enumerate}
\end{theorem}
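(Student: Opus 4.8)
The plan is to treat this as a standard exact-couple spectral sequence and then layer on the extra structure coming from the conjugation action; items (1), (3) and (4) are formal, while (5) and (6) carry the real content. Writing the exact couple (3) as $D \xrightarrow{i} D \xrightarrow{j} E^1 \xrightarrow{k} D$ with $i = x$, $j = \rho$, $k = \partial$, the general theory of derived couples produces a spectral sequence with $E^1 = E(n)^*(X)$ and differentials $d^r = j \circ i^{-(r-1)} \circ k$, where $i^{-(r-1)}$ is the partial inverse defined on the appropriate subquotient. Bookkeeping the degrees ($\deg j = 0$, $\deg k = \lambda(n)+1$, and each application of $i^{-1}$ raises degree by $\lambda(n)$ since $x$ lowers it by $\lambda(n)$) gives $\deg d^r = (\lambda(n)+1)+(r-1)\lambda(n) = r\lambda(n)+1$, which is item (4). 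The identification $\mathrm{Im}\, d^r \simeq M_r/M_{r-1}$ in item (3) is the usual dictionary between the differentials of a Bockstein-type spectral sequence and the associated graded of the $x$-torsion filtration, and follows directly from the definition of $M_r$ as $\ker[x^r]$ modulo $x^{r+1}$. For item (2) I would argue from nilpotency: since $x^{2^{n+1}-1}=0$, multiplication by $x$ has no $x$-divisible part, every class is $x$-torsion, and the sequence supports at most $2^{n+1}-1$ nonzero differentials, so $E^r=0$ for $r \ge 2^{n+1}$.

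The genuinely nonformal input is items (5) and (6), and I expect item (6) to be the crux. To compute $d^1 = \rho \circ \partial$ I would unwind the cofibration $\Sigma^{\lambda(n)}ER(n)\xrightarrow{x}ER(n)\to E(n)$ at the level of the $\mathbb{Z}/2$-action and identify the composite with the reduced conjugation operator. The expected form $d^1(z)=v_n^{-(2^n-1)}(1-c)(z)$ records two facts: that the boundary detects the difference between a class and its conjugate, and that the periodicity mismatch between $ER(n)$ and $E(n)$ is absorbed into the unit $v_n^{-(2^n-1)}$, whose degree $2(2^n-1)^2 = \lambda(n)+1$ matches $\deg d^1$ and where $c(v_n)=-v_n$. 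One should check compatibility $d^1\circ d^1=0$: since $c(v_n^{-(2^n-1)})=-v_n^{-(2^n-1)}$ (as $2^n-1$ is odd), the output $v_n^{-(2^n-1)}(1-c)(z)$ is $c$-invariant, so applying $(1-c)$ again kills it. The twisted Leibniz rule (item 5) I would obtain by tracing a product through $\partial$: because $x$ interacts nontrivially with the conjugation, the boundary of $ab$ picks up a factor of $c$ on one side, giving $d^r(ab)=d^r(a)b + c(a)d^r(b)$.

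The main obstacle, then, is establishing the precise geometric form of $d^1$ in item (6) and the twist in item (5); these are not consequences of exact-couple formalism but require genuine input about the Hu--Kriz/Kitchloo--Wilson $\mathbb{Z}/2$-equivariant structure and the behavior of $x$ under conjugation. Everything downstream is formal once these are in hand, so I would isolate them as the two lemmas to prove carefully.

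Finally items (7) and (8) follow from (5) and (6). If $c(z)=z$ then $d^1(z)=v_n^{-(2^n-1)}(1-c)(z)=0$, giving the first half of (7). For item (8) the key point is again the sign $c(v_n^{2^n-1})=-v_n^{2^n-1}$: for any $c$-invariant $z$ one computes $d^1(v_n^{2^n-1}z)=v_n^{-(2^n-1)}(1-c)(v_n^{2^n-1}z)=v_n^{-(2^n-1)}\cdot 2\,v_n^{2^n-1}z=2z$, exhibiting $2z$ as a boundary, so $E^2$ is killed by $2$. Every later page $E^r$ with $r\ge 2$ is a subquotient of $E^2$ and hence also a $\mathbb{Z}/2$-vector space, and via item (3) each $M_k/M_{k-1}$ with $k\ge 2$ is the image of such a $d^r$; filtering then shows all $M_j/M_i$ with $j\ge i>0$ are $\mathbb{Z}/2$-vector spaces. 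Granting item (8), the second half of (7) is immediate: if $c(z)=z$ on $E^r$ with $r\ge 2$, the twisted Leibniz rule gives $d^r(z^2)=d^r(z)z+c(z)d^r(z)=2\,z\,d^r(z)=0$.
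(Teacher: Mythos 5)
The first thing to note is that the paper offers no proof of this statement at all: it is quoted verbatim from Kitchloo--Wilson \cite[Theorem 4.2]{KW1} (``The results obtained in this section can be found in \cite{KW1}. We reproduce it here for the convenience of the reader''), so there is no in-paper argument to compare yours against. Judged on its own terms, your handling of the formal items is essentially right: the exact-couple formalism gives (1), (3), (4); the degree count $\deg d^r = (\lambda(n)+1)+(r-1)\lambda(n)$ is correct; $x^{2^{n+1}-1}=0$ forces $\mathrm{im}\, i^{2^{n+1}-1}=0$ and $\ker i^{2^{n+1}-1}=D$, whence $E^{2^{n+1}}=\ker k/\mathrm{im}\, j=0$ by exactness, which is (2); and your route from (5)--(6) to (7)--(8) is the standard one --- in particular the observation that every $d^1$-cycle is automatically $c$-invariant (since $v_n^{-(2^n-1)}$ is a unit, $\ker d^1=\ker(1-c)$) together with $d^1(v_n^{2^n-1}z)=2z$ is exactly how one sees that $E^2$ is killed by $2$.

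The genuine gap is precisely where you flag it: items (5) and (6) are never proved. ``I would unwind the cofibration \dots and identify the composite with the reduced conjugation operator'' and ``the boundary of $ab$ picks up a factor of $c$ on one side'' are statements of intent, not arguments; the consistency checks you run (that $\deg v_n^{-(2^n-1)}$ matches $\lambda(n)+1$, and that $d^1\circ d^1=0$) corroborate the formulas but do not establish them. These two items are the only non-formal content of the theorem and require the actual equivariant input of Hu--Kriz and Kitchloo--Wilson --- the identification of $E(n)$ as the cofiber of $x$ compatibly with the $\mathbb{Z}/2$-action exhibiting $ER(n)$ as a homotopy fixed point spectrum --- which is exactly why the paper cites \cite{KW1} rather than reproving them. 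A secondary, fixable slip: in (8) you cannot conclude that $M_j/M_i$ is a $\mathbb{Z}/2$-vector space merely because its filtration quotients $M_k/M_{k-1}$ are ($\mathbb{Z}/4$ is filtered by two copies of $\mathbb{Z}/2$); what you need is $2M\subseteq M_1$, which does follow from your own computation $d^1(v_n^{2^n-1}z)=2z$ once you add the observation, recorded in the paper right after the theorem, that every element of $M=\mathrm{im}(\rho)$ is fixed by $c$.
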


\vspace{5mm}
Note that the image of $ER(n)^*(X) \rightarrow E(n)^*(X)$ consists of targets of the differentials and therefore always have the differentials trivial on them. Also anything in the image is trivial under the action of $c$.

Since $ER(n)^*(-)$ is $2^{n+2}(2^n-1)$-periodic we will consider it as graded over $\mathbb{Z}/(2^{n+2}(2^n-1))$. We have to do the same then for $E(n)^*(-)$. We can do this by setting the unit $v_n^{2^{n+1}}= 1$ in the homotopy of $E(n)$. (This does not lose any information since we can always recover the original by inserting powers of $v_n^{2^{n+1}}$ to make the degrees match.)

\subsection{The spectral sequence for $ER(2)^*$}

For the Bockstein spectral sequence of Theorem 2.1 to be useful, we need to know the ring $ER(n)^*$. From now on we concentrate on the case $n=2$. This spectral sequence begins with $E^1= E(2)^*$ which is just a free $\mathbb{Z}_{(2)}[v_1]$-module on a basis given by $v_2^i$ for $0\leq i<8$. We are grading mod 48. Since all elements of $E^1$ have even degree and degree $\deg d^r=17r+1$, $d^r=0$ for $r$ even. As $E^8=0$, we only have $d^1,d^3,d^5,d^7$ to consider.

We have the differential $d^1$ acting as follows:
$$d^1(v_2^{2s+1})= v_2^{-3}(1-c)v_2^{2s+1}= v_2^{-3}2v_2^{2s+1}= 2v_2^{2s-2}.$$
Similarly $d^1(v_2^{2s})= 0$. 

However multiplication by $v_1$ doesn't behave well with respect to this differential. The problem is that $v_1 \in E(2)^*$ does not lift to $ER(2)^*$. We will need a substitute for $v_1$. We shall use the element $\alpha = v_1^{ER(2)}  \in ER(2)^{\lambda(2)-1}$ (see \cite[page 13]{KW1}). The image of $\alpha$ is $v_1v_2^5 \in E(2)^{-32}$. Because $v_2$ is a unit this a good substitute for ordinary $v_1$. Furthermore this is invariant under $c$ because it is in the image of the map from $ER(2)^*$ and is a permanent cycle. Or we could just see this by observing that there are an even number of $v$'s. We rewrite the homotopy of $E(2)$ as $\mathbb{Z}_{(2)}[\alpha,v_2^{\pm 1}]$ but again set $v_2^8=1$. 

Now back to the computation of $d^1$ on $v_2^{2s+1}$ where the $E_1$ term is a free $\mathbb{Z}_{(2)}[\alpha]$-module on generators $v_2^i,\, 0\leq i<8$.
$$d^1(\alpha^k v_2^{2s+1}) = d^1(\alpha^k)v_2^{2s+1} + c(\alpha^k)d^1(v_2^{2s+1}) = 0+\alpha^k 2v_2^{2s-2}.$$ But this really follows from the fact that we have a spectral sequence of $ER(2)^*$-modules. 
Thus the $d^1$-cycles form a free $\mathbb{Z}_{(2)}[\alpha]$-module generated by $\{ 1, v_2^2, v_2^4, v_2^6 \}$ and the $d^1$-boundaries form the free submodule with basis $\{ \alpha_0,\alpha_1,\alpha_2,\alpha_3 \}$, where $\alpha_i = 2v_2^{2i}$. In particular, $\alpha_0=2$. Thus $E^2 = E^3$ is the free $\mathbb{F}_2[\alpha]$-module with the basis (the images of) $\{ 1, v_2^2,v_2^4,v_2^6 \}$.

By \cite{KW1}, $d^3(v_2^2) =\alpha v_2^4$. Since $d^3$ is a derivation, $d^3(v_2^6)=\alpha$, and the only elements of $E^4$ are 1 and $v_2^4$. Since $\deg d^5= 38$, $d^5=0$. We must have $d^7(v_2^4)=1$ to make $E^8=0$.

We can read off $M$ as a $\mathbb{Z}_{(2)}[\alpha]$-submodule of $E^1$. $M_1 = \hbox{Im}d^1$ is the free module on basis $\{ \alpha_0,\alpha_1,\alpha_2,\alpha_3 \}$. $M_3$ is generated as a module by adding the elements $\alpha$ and $w = \alpha v_2^4$, which make $M_3/M_1$ the free $\mathbb{F}_2[\alpha]$-module with basis $\{ \alpha, w \}$. Finally, the only new element of $M_7$ is 1. The rest of the module structure is given by

\begin{equation}
2\alpha = \alpha\alpha_0, 2w=\alpha\alpha_2, 2.1 =\alpha_0, \alpha.1 = \alpha
\end{equation} 

Further, $M$ is a subring of $E^1$, generated by $\alpha_1, \alpha_2, \alpha_3, \alpha$ and $w$. The products not already given are
\begin{eqnarray}
\alpha_s\alpha_t &=& 2\alpha_{s+t}  \,\,\,\,(\hbox{taking} \,\,s+t \,\hbox{mod}\, 4)\\
w\alpha_s &=& \alpha\alpha_{s+2} \,\,\,\,(\hbox{taking} \,\,s+2 \,\hbox{mod}\, 4)\\
w^2 &=& \alpha^2
\end{eqnarray}

To obtain $ER(2)^*$, we must unfilter $M= ER(2)^*/(x)$ and add the generator $x$. We lift each generator $\alpha_s$ and $w$ to $ER(2)^*$, keeping the same names, with the relations 
\begin{equation}
\alpha_sx=0,\alpha x^3=0,wx^3=0,x^7=0.
\end{equation}
By sparseness, each $\alpha_s$ and $w$ lifts {\it uniquely} to $ER(2)^*$; further, the module actions (4) and multiplications (5), (6), (7) also lift uniquely and hold in $ER(2)^*$, not merely mod$(x)$.

\begin{prop}\cite[Section 5]{KW1}
$ER(2)^*$ is graded over $\mathbb{Z}/48$. It is generated as a ring by elements, $$x,w,\alpha,\alpha_1,\alpha_2,\alpha_3$$ of degrees $-17,-8,-32,-12,-24$ and $-36$ respectively, with relations and products as listed above.

\end{prop}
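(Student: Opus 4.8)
The plan is to read $ER(2)^*$ off the Bockstein spectral sequence of Theorem 2.1, whose differentials were computed above; the Proposition is then a matter of assembling that data and resolving the $x$-adic filtration. First I would record the grading: since $ER(2)$ is $2^{n+2}(2^n-1)$-periodic with $n=2$, i.e. $2^4\cdot 3 = 48$-periodic, the ring $ER(2)^*$ is naturally graded over $\mathbb{Z}/48$. All the quoted degrees follow by reducing the images in $E(2)^*$ mod $48$: with $|v_2|=-6$ one gets $\alpha\mapsto v_1 v_2^5$ in degree $-32$, $w=\alpha v_2^4$ in degree $-56\equiv -8$, $\alpha_s=2v_2^{2s}$ in degree $-12s$ (so $\alpha_1,\alpha_2,\alpha_3$ sit in $-12,-24,-36$), and $x\in ER(2)^{-\lambda(2)}$ in degree $-17$.

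Second, I would run the spectral sequence exactly as above. Starting from $E^1=E(2)^*=\mathbb{Z}_{(2)}[\alpha]\{v_2^i : 0\le i<8\}$, parity kills $d^r$ for even $r$ and the degree count kills $d^5$, leaving only $d^1,d^3,d^7$; the requirement $E^8=0$ forces $d^7(v_2^4)=1$. Using the derivation formula (Theorem 2.1(5)) together with the fact that the higher pages are $\mathbb{F}_2$-vector spaces (Theorem 2.1(8)) pins down every differential, and via $\mathrm{Im}\,d^r\simeq M_r/M_{r-1}$ (Theorem 2.1(3)) this determines the entire filtration $M_1\subset M_3\subset M_7=M$ of $M=ER(2)^*/(x)$.

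Third, I would identify $M$ as the subring of $E(2)^*$ generated by $\alpha_1,\alpha_2,\alpha_3,\alpha,w$, with the module relations (4) and the products (5)--(7). These are checked by direct multiplication in $E(2)^*$, recalling $\alpha_s=2v_2^{2s}$, $w=\alpha v_2^4$ and $v_2^8=1$: for instance $\alpha_s\alpha_t=4v_2^{2(s+t)}=2\alpha_{s+t}$, $w\alpha_s=2\alpha v_2^{2s+4}=\alpha\alpha_{s+2}$, and $w^2=\alpha^2 v_2^8=\alpha^2$.

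Finally -- and this is the real content -- I would unfilter: lift each generator of $M$ to $ER(2)^*$ and adjoin $x$, subject to the relations (8), the last of which is the general bound $x^{2^{n+1}-1}=x^7=0$. The crucial tool is \emph{sparseness}: the generator degrees are spread far enough apart mod $48$ that each relevant total degree contains at most one candidate, so every generator lifts uniquely and each relation among them lifts without ambiguity from $M$ to $ER(2)^*$. I expect this unfiltering to be the main obstacle, since passing from the associated graded $M$ to the honest ring $ER(2)^*$ is a priori obstructed by additive and multiplicative extensions through the filtration; the work is to show sparseness leaves no room for such extensions and to pin down precisely which power of $x$ annihilates each generator, reading $\alpha_s x=0$, $\alpha x^3=0$ and $w x^3=0$ off the filtration levels $M_1\subset M_3\subset M_7$.
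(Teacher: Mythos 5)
Your proposal is correct and follows essentially the same route as the paper (which itself reproduces Section 5 of Kitchloo--Wilson): run the Bockstein spectral sequence from $E^1=E(2)^*$, note that parity and degree considerations leave only $d^1$, $d^3$, $d^7$, read off $M=ER(2)^*/(x)$ with its generators and products, and unfilter using sparseness to lift the generators and the relations $\alpha_s x=0$, $\alpha x^3=wx^3=0$, $x^7=0$. Your degree computations and product verifications (e.g.\ $w^2=\alpha^2 v_2^8=\alpha^2$) all match the paper's.
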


\subsection{The Bockstein Spectral Sequence for $ER(2)^*(RP^{\infty})$}

As always, we have the split short exact sequence $$0 \rightarrow ER(2)^*(X,*) \rightarrow ER(2)^*(X) \rightarrow ER(2)^*(*) \rightarrow 0$$ for any space $X$ with basepoint $*$. Now that we know $ER(2)^*(*) = ER(2)^*$, we will concentrate on $ER(2)^*(X,*)$. Nevertheless, we need to use the action of $ER(2)^*$ on $ER(2)^*(X)$ and hence $ER(2)^*(X,*)$; furthermore, this action extends to actions of the Bockstein spectral sequences.

The $E(2)^*$-cohomology of $RP^{\infty}$ can be computed from the Gysin sequence 
$$\xymatrix{
\ldots E(2)^{k-2}(\mathbb{C}P^{\infty}) \ar[r]^{[2](x_2)} &E(2)^k(\mathbb{C}P^{\infty}) \ar[r] &E(2)^k(\widetilde{RP^{\infty}}) \ar[r] &E(2)^{k-1}(\mathbb{C}P^{\infty}) \ldots\\
}$$
where $E(2)^*(\mathbb{C}P^{\infty}) \simeq E(2)^*[[x_2]]$, $x_2 \in E(2)^{2}(\mathbb{C}P^{\infty})$. From above $E(2)^*(RP^{\infty}) \simeq E(2)^*[[x_2]]/([2](x_2))$. Recall that $ER(2)^*(RP^{\infty}) = ER(2)^*[[u]]/([2](u))$ where we have $u \in ER(2)^{1-\lambda(2)}(RP^{\infty})$. We will replace the $x_2$ by the image of $u \in ER(2)^{-16}(RP^{\infty})$ which we also call $u \in E(2)^{-16}(RP^{\infty})$, which is really $v_2^3x_2$. Likewise we replace the usual $v_1 \in E(2)^{-2}$ with $v_2^5v_1 =\alpha \in E(2)^{-32}$ which comes from $\alpha \in ER(2)^{-32}$. The element $w \in ER(2)^{-8}$ maps to $\alpha v_2^4 = v_2v_1 \in E(2)^{-8}$. These changes are necessary because $x_2$ and $v_1$ are not in the image of $ER(2)$-cohomology.

We will describe our groups in terms of a {\it 2-adic basis} in the sense of \cite{KW1}, i.e, a set of elements such that any element in our group can be written as a unique sum of these elements with coefficients 0 or 1 (where the sum is allowed to be a formal power series in $u$). In the ring $E(2)^*(RP^{\infty})$ we have $2u = \alpha u^2 + \ldots$, therefore the 2-adic basis is given by $v_2^i\alpha^ku^j, \,(0\leq i <8, 0\leq k, 1\leq j)$.  

The original relation $[2](u) = 2u+_F v_1u^2 +_F v_2u^4$ for $E\mathbb{R}(2)$ converts to the relation 
\begin{equation}
[2](u)=2u+_F \alpha u^2 +_F u^4
\end{equation}
since $v_1$ is replaced by $v_1(v_2^3)^{-1}=v_1v_2^5=\alpha$ and $v_2$ is replaced by $v_2(v_2^3)^{-3} = v_2^{-8}=1$. Because $2x=0$, $x$ times the relation (9) gives us $0=x(\alpha u^2 +_F u^4)$. Therefore from the point of view of $x^1$-torsion $\alpha u^2$ can be replaced with $u^4+\ldots$. Similarly, if we multiply by $x^3$ and use the relation $x^3\alpha =0$ we end up with $x^3u^4 =0$.

Since $u \in E(2)^*(RP^{\infty})$ is in the image from $ER(2)^*(RP^{\infty})$ our differentials commute with multiplication by $u$ and also commute with multiplication by $\alpha$. The $d^1$ differential creates a relation coming from our relation $0=2u +_F \alpha u^2 +_F u^4$ when $2u$ is set to zero. So in $E^2$, we have $\alpha u^2 \equiv u^4+\ldots$. 
The Bockstein spectral sequence goes like this:

\begin{theorem}\cite[Theorem 8.1]{KW1}

$E^1 =E(2)^*(RP^{\infty},*)$ is represented by $$v_2^i\alpha^ku^j \hspace{3mm} (0\leq i\le 7, \hspace{3mm} 0\leq k, \hspace{3mm} 1\leq j).$$ $$d^1(v_2^{2s-5}\alpha^k u^j)=2v_2^{2s}\alpha^k u^j = v_2^{2s}\alpha ^{k+1}u^{j+1}+ \ldots$$

\vspace{5mm}
$E^2=E^3$ is given by: $$v_2^{2s}\alpha^k u, \hspace{3mm} v_2^{2s}u^j \hspace{3mm} (2\leq j, \hspace{3mm} 0\leq s\le 4, \hspace{3mm} 0 \leq k)$$ $$d^3(v_2^{4s-2}\alpha ^k u) = v_2^{4s}\alpha ^{k+1}u,\,\,\,d^3(v_2^{4s-2}u^j) = v_2^{4s}\alpha u^j = v_2^{4s}u^{j+2}+\ldots$$

\vspace{5mm}
$E^4=E^5=E^6=E^7$ is given by:$$ v_2^4u^{\{1-3\}}, \hspace{3mm} u^{\{1-3\}}$$ $$d^7(v_2^4u^{\{1-3\}})=u^{\{1-3\}}.$$

\vspace{5mm}
The $x^1$-torsion generators are given by:$$\alpha_i\alpha ^ku^j \hspace{3mm} (0\leq i, \hspace{3mm} 0\leq k, \hspace{3mm} 1\leq j)$$ where $\alpha_0=2$.

\vspace{5mm}
The $x^3$-torsion generators are given by:$$\alpha^{k+1}u,\, \alpha^k wu \,\,(k \geq 0),\, u^j\,\,(j \geq 4),\, wu^j\,\,(j \geq 2).$$

\vspace{3mm}
The only $x^7$-torsion generators are $$u^{\{1-3\}}.$$
\end{theorem}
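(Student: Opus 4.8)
The plan is to treat the Bockstein spectral sequence for $RP^{\infty}$ as a spectral sequence of modules over the point spectral sequence computed in Section 2.2, exploiting two facts: $u$ and $\alpha$ are permanent cycles (being in the image of $ER(2)^*(RP^{\infty})$), and multiplication by them commutes with every $d^r$ by Theorem 2.1(5). Since $E(2)^*(RP^{\infty},*)$ has $E(2)^*$-$2$-adic basis $v_2^i\alpha^k u^j$, every differential is then determined $\mathbb{Z}_{(2)}[\alpha]$- and $u$-linearly by its values on the $v_2^i$, which are already known from the point computation, together with the single relation $[2](u)=2u+_F\alpha u^2+_F u^4=0$, i.e. $2u=\alpha u^2+\ldots$ in $E^1$. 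So the entire theorem reduces to transporting the point differentials across this module structure and reading off the images via Theorem 2.1(3).

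First I would pin down $d^1$. By Theorem 2.1(6), $d^1=v_2^{-3}(1-c)$, and since $c$ fixes $u$ and $\alpha$ we get $d^1(v_2^i\alpha^k u^j)=d^1(v_2^i)\alpha^k u^j$; the point formula $d^1(v_2^{2s+1})=2v_2^{2s-2}$, equivalently $d^1(v_2^{2s-5})=2v_2^{2s}$ modulo $v_2^8=1$, gives $d^1(v_2^{2s-5}\alpha^k u^j)=2v_2^{2s}\alpha^k u^j$, and the relation rewrites this as $v_2^{2s}\alpha^{k+1}u^{j+1}+\ldots$, which is the asserted formula. Taking homology: the cycles are the even powers $v_2^{2s}\alpha^k u^j$, the boundaries are exactly those with $k\ge 1$ and $j\ge 2$, so the survivors are $v_2^{2s}\alpha^k u$ and $v_2^{2s}u^j\ (j\ge 2)$; and $d^2=0$ by parity ($\deg d^r=17r+1$ is odd for $r$ even, while everything sits in even degrees), so $E^2=E^3$. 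The images are the $x^1$-torsion generators $\alpha_i\alpha^k u^j$ with $\alpha_i=2v_2^{2i}$.

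Next I would repeat this on $E^3$, now a module over $\mathbb{F}_2[\alpha]$. The point value $d^3(v_2^2)=\alpha v_2^4$ yields $d^3(v_2^{4s-2}\alpha^k u)=v_2^{4s}\alpha^{k+1}u$, and on the $u$-powers the induced $E^2$-relation $\alpha u^2\equiv u^4+\ldots$ gives $d^3(v_2^{4s-2}u^j)=v_2^{4s}\alpha u^j=v_2^{4s}u^{j+2}+\ldots$. Homology leaves only $v_2^4 u^{\{1-3\}}$ and $u^{\{1-3\}}$; separating $v_2^{4s}\in\{1,v_2^4\}$ and recalling $w=\alpha v_2^4$ reads the images off as the $x^3$-torsion $\alpha^{k+1}u,\ \alpha^k wu,\ u^j\,(j\ge4),\ wu^j\,(j\ge2)$. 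Here $d^4=d^6=0$ by parity and $d^5=0$ by sparseness (none of the surviving degrees lands in the degree-$38$ shift of another surviving class mod $48$), so $E^4=E^5=E^6=E^7$. Finally $d^7(v_2^4)=1$ extended $u$-linearly gives $d^7(v_2^4 u^{\{1-3\}})=u^{\{1-3\}}$, forcing $E^8=0$ as Theorem 2.1(2) demands and identifying $u^{\{1-3\}}$ as the only $x^7$-torsion.

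The main obstacle I anticipate is bookkeeping the formal-power-series corrections uniformly. Every application of $[2](u)=0$ introduces higher $u$-terms, so I must verify that the passages $2v_2^{2s}\alpha^k u^j=v_2^{2s}\alpha^{k+1}u^{j+1}+\ldots$ and $\alpha u^2\equiv u^4+\ldots$ are consistent with a genuine $2$-adic basis on each page, ensuring that the ``survivors'' and ``images'' really are free on the listed generators and not merely spanning sets. The derivation rule $d^r(ab)=d^r(a)b+c(a)d^r(b)$ together with the $c$-invariance of $u$ and $\alpha$ keeps the leading terms clean, so the corrections are strictly higher order in $u$ and cannot disturb the leading identifications; checking this triangularity carefully is the crux, after which the homology and torsion read-offs are routine.
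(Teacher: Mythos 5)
Your proposal is correct and follows essentially the same route as the paper (which itself defers to Kitchloo--Wilson but sketches exactly this argument): treat the $RP^{\infty}$ Bockstein spectral sequence as a module over the point spectral sequence via the permanent cycles $u$ and $\alpha$, transport the known differentials on the $v_2^i$, and use $[2](u)=0$ to trade $2u$ for $\alpha u^2+\ldots$ and then $\alpha u^2$ for $u^4+\ldots$ on successive pages. Your attention to the triangularity of the higher-order corrections is the right place to focus the care, and the degree check killing $d^5$ is exactly the paper's sparseness argument.
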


In degrees that are multiples of 8 (denoted $8*$), the description of $ER(2)^*(RP^{\infty},*)$ simplifies enormously. As $x$ is the only generator whose degree is not a multiple of 4, and $x^4$ kills everything except powers of $u$, multiplication by powers of $x$ produces no new elements in degree $8*$. 

\begin{cor}\cite[Theorem 2.1]{KW2}
The homomorphism
$$ER(2)^{8*}(RP^{\infty},*) \rightarrow E(2)^{8*}(RP^{\infty},*)$$ is injective, and is almost surjective -- the only elements not hit are $v_2^4u^j$ for $1 \leq j\leq 3$.
\end{cor}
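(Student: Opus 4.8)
The plan is to read everything off the exact couple (3), which gives $\ker\rho=\mathrm{im}(x)$ and $\mathrm{im}\,\rho=M$, the image of the Bockstein. Injectivity in degree $8*$ then amounts to showing $\mathrm{im}(x)$ meets no degree $8*$, and the surjectivity statement becomes the question of which classes of $E(2)^{8*}(RP^{\infty},*)$ occur as targets of the differentials $d^1,d^3,d^7$.

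For injectivity I would run the degree/parity bookkeeping indicated just before the statement. Since $\deg x=-\lambda(2)=-17\equiv 7\pmod 8$ while each of $w,\alpha,\alpha_1,\alpha_2,\alpha_3,u$ has degree $\equiv 0$ or $4\pmod 8$, any monomial in these latter generators has degree $\equiv 0\pmod 4$, and $x^m\cdot(\text{such a monomial})$ can land in degree $\equiv 0\pmod 8$ only if $m=4$ and the monomial has degree $\equiv 4\pmod 8$. But a monomial of degree $\equiv 4\pmod 8$ must contain a factor $\alpha_1$ or $\alpha_3$, and $\alpha_s x=0$ by (8), so $x^4$ annihilates it; the only classes surviving $x^4$ are built from the $x^7$-torsion generators $u,u^2,u^3$ of Theorem 2.2, which lie in degree $\equiv 0\pmod 8$. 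Hence $x\cdot ER(2)^*(RP^{\infty},*)$ contains nothing in degree $8*$, so $\ker\rho=0$ there.

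For surjectivity, note first that $v_2^i\alpha^k u^j$ lies in degree $8*$ exactly when $i\in\{0,4\}$, so $E(2)^{8*}(RP^{\infty},*)$ has 2-adic basis $\{\alpha^k u^j,\ v_2^4\alpha^k u^j:k\ge 0,\,j\ge 1\}$. By the remark after Theorem 2.1, $\mathrm{im}\,\rho=M$ consists of the targets of the differentials, which I would tabulate from Theorem 2.2: $d^1$ contributes $2\alpha^k u^j$ and $2v_2^4\alpha^k u^j$; $d^3$ contributes $\alpha^k u\,(k\ge1)$, $v_2^4\alpha^k u\,(k\ge1)$, $u^j\,(j\ge4)$, and $wu^j=v_2^4\alpha u^j\,(j\ge2)$; and $d^7$ contributes $u,u^2,u^3$. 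The goal is then to check that these targets, together with the integral relation $2u=\alpha u^2 +_F u^4+\cdots$ from (9), generate precisely the basis elements other than $v_2^4u,v_2^4u^2,v_2^4u^3$.

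The two directions split as follows. The three exceptions are genuinely missed: $v_2^4u^{\{1-3\}}$ survive to $E^7$ and support $d^7$ (mapping to $u^{\{1-3\}}$), so they are sources, never targets, hence lie outside $M$; and $2v_2^4u^j\in M$ shows the cokernel is 2-torsion spanned by their three classes. The main obstacle is the complementary claim that every remaining basis element does lie in $M$. This is where the 2-adic bookkeeping bites: the $d^1$-targets only supply \emph{twice} a basis element, so to capture $\alpha^k u^j$ with $k\ge1,\,j\ge2$ and $v_2^4u^j$ with $j\ge4$ I must combine the mod-$2$ picture (in which $\alpha u^2\equiv u^4+\cdots$ collapses these onto pure $u$-powers that are $d^3$- or $d^7$-targets) with the integral relation $2u=\alpha u^2+u^4+\cdots$, and run an induction on the $u$-power to lift each target-modulo-higher-filtration to the actual basis element. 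Reconciling the $\mathbb{Z}_{(2)}$-module structure of $E^1$ with the $\mathbb{F}_2$ spectral-sequence pages is the delicate step; once it is carried out, $\mathrm{im}\,\rho$ is exactly the span of all basis elements except $v_2^4u^{\{1-3\}}$, which is the assertion.
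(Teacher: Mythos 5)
Your argument is correct and follows essentially the same route as the paper (which itself defers to \cite{KW2} and gives only a two-sentence justification): injectivity comes from the observation that $x$ is the only generator of degree $\not\equiv 0 \pmod 4$ combined with the relations $\alpha_s x=0$ and the $x$-torsion structure of Theorem 2.2, and near-surjectivity comes from reading the degree-$8*$ part of $M$ off the targets of $d^1,d^3,d^7$, with $v_2^4u^{\{1-3\}}$ excluded because they survive to $E^7$ as sources. You supply more detail than the paper does, and the one step you leave unexecuted (the 2-adic induction on the $u$-power using $[2](u)=0$ to show every non-exceptional basis element actually lies in $M$) is correctly identified and is likewise suppressed in the paper.
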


We shall compute the Bockstein spectral sequence associated to $ER(2)$ for the odd-dimensional projective space $RP^{16K+9}$. This gives us nonimmersion results which are often new and differ from those of \cite{KW2}.
We will have to introduce some new elements for the computations in the next section. The Atiyah-Hirzebruch spectral sequence for $ER(2)^*(RP^2)$ gives elements $x_1$ and $x_2$ in filtration degrees 1 and 2. As a $ER(2)^*$ module $ER(2)^*(RP^2)$ is generated by elements we will call $z_{-16}$ represented by $xx_1$ and $z_2$ represented by $x_2$. The element $z_{-16}=u \in ER(2)^*(RP^2)$.

For the cofibration $S^1 \rightarrow RP^2 \rightarrow S^2$, the long exact sequence 
$$\xymatrix{
ER(2)^*(S^1) \ar[dr]_{\partial}  &&ER(2)^*(RP^2) \ar[ll]_{i^*}\\
&ER(2)^*(S^2,*) \ar[ur]_{\rho^*}\\
}$$
is given by $\partial(i_1) = 2i_2, \rho^*(i_2)=z_2$, and $i^*(u)=xi_1$.

We know that $\Sigma^{2n-2}ER(2)^*(RP^2,*) \simeq ER(2)^*(RP^{2n}/RP^{2n-2},*)$. We have elements $z_{2n-18},z_{2n} \in ER(2)^*(RP^{2n}/RP^{2n-2},*)$. These elements map to $v_2^{5n+3}u^n$ and $v_2^{5n}u^n$ respectively in $E(2)^*(RP^{2n}/RP^{2n-2},*)$ by \cite[section 10]{KW1}.

\section{$ER(2)^*(RP^{16K+9},*)$}

In \cite{KW1} and \cite{KW2} Kitchloo and Wilson considered all even dimensional real projective spaces and the odd dimensions $16K+1$. Our object is to extend the results to more odd-dimensional cases.

\begin{prop}
The element $u^{8K+5} \in ER(2)^*(RP^{16K+10})$ maps to a non-zero element of $ER(2)^*(RP^{16K+9})$.
\end{prop}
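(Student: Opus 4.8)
The plan is to compare $RP^{16K+9}$ with the even‑dimensional space $RP^{16K+10}$ through the cofibration $RP^{16K+9}\xrightarrow{i}RP^{16K+10}\xrightarrow{q}S^{16K+10}$, whose cofibre is the top cell. Applying reduced $ER(2)$‑cohomology yields the long exact sequence $\cdots\to ER(2)^*(S^{16K+10})\xrightarrow{q^*}ER(2)^*(RP^{16K+10},*)\xrightarrow{i^*}ER(2)^*(RP^{16K+9},*)\to\cdots$, in which $i^*$ is precisely the restriction in question. Since the class $u^{8K+5}$ is a reduced class, $i^*(u^{8K+5})=0$ if and only if $u^{8K+5}$ lies in the image of $q^*$, so the whole problem reduces to proving $u^{8K+5}\notin\operatorname{Im}(q^*)$.

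Because $ER(2)^*(S^{16K+10})$ is free of rank one over $ER(2)^*$ on a fundamental class $i_{16K+10}$ in degree $16K+10$, and $q^*$ is $ER(2)^*$‑linear, its image is the principal submodule $ER(2)^*\cdot q^*(i_{16K+10})$. In the degree of $u^{8K+5}$, namely $-16(8K+5)=-128K-80$, the image is therefore $ER(2)^{d}\cdot q^*(i_{16K+10})$ with $d=-128K-80-(16K+10)=-144K-90\equiv 6 \pmod{48}$ (using $144=3\cdot48$ and $-90+96=6$). The crucial observation is that $6\not\equiv0\pmod 4$: from the ring structure of $ER(2)^*$ recalled above, the quotient $M=ER(2)^*/(x)$ is generated by $w,\alpha,\alpha_1,\alpha_2,\alpha_3$, all of degree divisible by $4$, so $M$ is concentrated in degrees $\equiv0\pmod 4$ and hence $M^{6}=0$. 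Thus every element of $ER(2)^{6}$ is divisible by $x$, and the relevant part of $\operatorname{Im}(q^*)$ lies in $x\cdot ER(2)^*(RP^{16K+10},*)=\operatorname{Im}(x)$, which by exactness of the couple (3) is contained in $\ker\bigl(\rho:ER(2)^*(RP^{16K+10},*)\to E(2)^*(RP^{16K+10},*)\bigr)$.

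It then suffices to check that $u^{8K+5}$ does \emph{not} lie in this kernel, i.e. that $\rho(u^{8K+5})=u^{8K+5}$ is nonzero in $E(2)^*(RP^{16K+10},*)$. This is the one substantive input and the main obstacle: $u^{8K+5}$ has Atiyah–Hirzebruch filtration $16K+10$, exactly the dimension of $RP^{16K+10}$, so it represents a unit multiple of the top class, and I would extract its nonvanishing from the known $E(2)^*$‑computation of the even real projective spaces — equivalently from relation (9) together with the Gysin sequence, which identify $u^{8K+5}$ with the order‑two top element. Granting this, $u^{8K+5}\notin\ker\rho\supseteq\operatorname{Im}(q^*)$ in the relevant degree, whence $u^{8K+5}\notin\operatorname{Im}(q^*)$ and $i^*(u^{8K+5})\neq0$. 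I would emphasise that the mod‑$4$ degree count is exactly where the specific dimension enters: for a neighbouring power such as $u^{8K+4}$ on $RP^{16K+8}$ the analogous coefficient group is $ER(2)^{24}$, which contains $\alpha_2$, an element \emph{not} divisible by $x$, and the argument correctly fails to apply.
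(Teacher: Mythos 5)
Your proposal is correct and follows essentially the same route as the paper: the same cofibration long exact sequence reducing the claim to $u^{8K+5}\notin\operatorname{Im}(q^*)$, the same degree count showing that in the relevant degree ($\equiv 6 \bmod 48$, where the paper instead lists the monomials $\alpha^k w x^2$) everything in $\operatorname{Im}(q^*)$ lies in the ideal $(x)=\ker\rho$, and the observation that $\rho(u^{8K+5})=u^{8K+5}\neq 0$ in $E(2)^*(RP^{16K+10},*)$. Your mod-$4$ packaging of the coefficient computation is a slightly cleaner way of saying what the paper says.
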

Note that this cannot happen for a complex oriented cohomology theory.

\vspace{5mm}
\begin{proof} We use the exact sequence 
$$\xymatrix{
ER(2)^*(S^{16K+10},*) \ar[r]^{q^*} &ER(2)^*(RP^{16K+10}) \ar[r] &ER(2)^*(RP^{16K+9})\\
}$$
We only have to show that $u^{8K+5}$ is not in the image of $q^*$. Now $ER(2)^*(S^{16K+10},*)$ is the free $ER(2)^*$-module generated by the element $i_{16K+10}$, and $q^*$ is known.

The structure of $ER(2)^*(RP^{16K+10})$ is given by \cite[Theorem 13.2]{KW1} and \cite[Theorem 13.3]{KW1}. We give a complete description of $M$, where 
$$ER(2)^*(RP^{16K+10},*)/xER(2)^*(RP^{16K+10},*) \simeq M \subset E(2)^*(RP^{16K+10},*)$$ 
as a submodule of $E(2)^*(RP^{16K+10})=\mathbb{Z}_{(2)}[\alpha,v_2^{\pm 1}][u]/(u^{8K+6},[2](u))$. We describe $M$ by specifying a 2-adic basis. As $d^r=0$ for $r$ even, $M$ is filtered by $0=M_0\subset M_1 =M_2 \subset M_3 = M_4 \subset M_5 = M_6 \subset M_7 = M,$ where $M_r/M_{r-1} \simeq \hbox{Im}d^r$. Elements of $M_r$ not in $M_{r-1}$ lift to $x^r$-torsion elements of $ER(2)^*(RP^{16K+10},*)$.

As both $\alpha$ and $u$ come from $ER(2)$-cohomology, we may describe $M$ from section 2 as a filtered $\mathbb{Z}_2[\alpha,u]$-module. We write $z_t$ for various elements of $ER(2)^*(RP^{16K+10},*)$ and $\bar{z_t}$ for its image in $M$, where $t$ denotes the degree.

$\alpha^k u^j(u\alpha_s) \in M_1$ for $k\geq0$, $0\leq j \leq 8K+4$, $0\leq s \leq 3$, where $u\alpha_s = 2v_2^{2s}u = d^1(v_2^{2s+3}u)$. Note that $\alpha_0=2$.

$\alpha^k(u\alpha)\in M_3$ for $k \geq 0$, where $u\alpha = d^3(v_2^{-2}u)$. 

$\alpha^k(uw) \in M_3$ for $k \geq 0$, where $uw = d^3(v_2^2u)= uv_2^4\alpha$.

$\alpha^ku^j\beta_0 \in M_3$ for $k\geq 0$, $0\leq j \leq 8K+1$, where $\beta_0= d^3(v_2^{-2}u^2)=\alpha u^2 \equiv u^4 + \ldots$ mod 2= $\alpha_0$.

$\alpha^k u^j \beta_1 \in M_3$ for $k \geq 0$, $0\leq j \leq 8K+1$ where $\beta_1 = d^3(v_2^2u^2)= w u^2$.

$\alpha^k \gamma_0 \in M_3$ for $k \geq 0$, where $\gamma_0 = v_2\alpha u^{8K+5} = d^3(v_2^{-1} u^{8K+5})$.

$\alpha^k \gamma_1 \in M_3$ for $k \geq 0$, where $\gamma_1 = v_2^5 \alpha u^{8K+5} = d^3(v_2^3 u^{8K+5})$.

$\bar{z}_{16K+10} \in M_5$, where $z_{16K+10} = q^*i_{16K+10}$ is induced from $S^{16K+10}$. Then $\bar{z}_{16K+10} = v_2u^{8K+5} = d^5(v_2^2u ^{8K+4})$. Note that $\alpha \bar{z}_{16K+10} = \gamma_0$ and $w \bar{z}_{16K+10} = \gamma_1$. 

$\bar{z}_{16K-14} \in M_5$, where $\bar{z}_{16K-14} = v_2^5u^{8K+5} = d^5(v_2^6u^{8k+4})$. Note that $\alpha \bar{z}_{16K-14}= \gamma_1$ and $w\bar{z}_{16K-14}=\gamma_0$.

$\bar{z}_{16K+4} \in M_7$, where $\bar{z}_{16K+4} = v_2^2u^{8K+5} = d^7(v_2^6 u^{8K+5})$.

$u^j \in M_7$, for $1 \leq j \leq 3$, since $d^7(v_2^4u^j)=u^j$.

The action of $\alpha$ on $M$ is clear, except for $\alpha \bar{z}_{16K+4} = -u^{8K+4} \alpha_1$. The relations involving $u$ can also be determined, but are not useful. We really need the corresponding lifted relations in $ER(2)$-cohomology, where they hold only mod$(x)$. Again, we have

\begin{equation}
u^{8K+6} = x^2z_{16K-14}, \,\, u^{8K+7}=x^4 z_{16K+4}, \,\, u^{8K+8}=0.
\end{equation}

Now in $ER(2)$-cohomology, $q^*i_{16K+10} = v_2u^{8K+5}$, from  \cite[(13.1)]{KW1}. Since the degrees of $\alpha_2, \alpha, w$ and $x$ are $-12s,16,40$ and $-17$ respectively, mod 48, the only elements in $ER(2)^*i_{16K+10}$ of degree $-16(8K+5)$ have the form $\alpha ^k w x^2 i_{16K+10}$. Then we have $q^*(\alpha ^q w x^2 i_{16K+10})=v_2\alpha ^q w x^2 u^{8K+5}$, which lies in the ideal $(x)$ and is not the same as $u^{8K+5}$.
\end{proof}

\subsection{The Bockstein spectral sequence for $ER(2)^*(RP^{16K+9})$}

We compute this cohomology by sandwiching it between the $ER(2)$-cohomology of $RP^{16K+10}$ and $RP^{16K+8}$, which we know, in the commutative diagram of exact sequences

\begin{equation}
\xymatrix{
ER(2)^*(S^{16K+10},*) \ar[d]^{\rho^*} \ar[r]^= &ER(2)^*(S^{16K+10},*) \ar[d]\\
ER(2)^*(\frac{RP^{16K+10}}{RP^{16K+8}},*) \ar[r] \ar[d]^{i^*} &ER(2)^*(RP^{16K+10},*)\ar[d] \ar[r] &ER(2)^*(RP^{16K+8},*) \ar[d]^=\\
ER(2)^*(S^{16K+9},*) \ar[r] \ar[d]^2 &ER(2)^*(RP^{16K+9},*) \ar[r] &ER(2)^*(RP^{16K+8},*)\\
ER(2)^*(S^{16K+9},*)\\
}
\end{equation}

The $E^1$-term for the Bockstein spectral sequence is just $E(2)^*(RP^{16K+9},*)$, which decomposes \cite{Davis84} as $$E(2)^*(RP^{16K+8},*)\oplus E(2)^*(S^{16K+9},*).$$ via the maps $$RP^{16K+8} \rightarrow RP^{16K+9} \rightarrow S^{16K+9}$$ 
(In general, $E(2)^*(RP^{2n})=E(2)^*[u]/(u^{n+1},[2](u))$, as $E(2)$ is complex oriented and $E(2)^*$ has no 2-torsion.) The even part of the $E^1$-term has the 2-adic basis $$ v_2^i\alpha ^k u^j \hspace{5mm} (0\leq i \le 7, \hspace{5mm} 0 \le k, \hspace{5mm} 0\le j \leq 8K+4) $$ 
The odd part is a free $\mathbb{Z}_{(2)}$-module with basis $$v_2^i\alpha^k i_{16K+9} \hspace{5mm} (0\leq i \le 7, \hspace{5mm} 0 \leq q, \hspace{5mm} 0\leq k)$$
Since $d^1$ is of even degree we can just read off our $d^1$ from \cite[Theorem 13.2]{KW1} for $RP^{16K+8}$ and section 5 of \cite{KW1} for the $S^{16K+9}$ part.

$$d^1(v_2^{2s-5}\alpha^ku^j) = 2v_2^{2s}\alpha^ku^j = v_2^{2s}\alpha^{k+1}u^{j+1} +\ldots \hspace{5mm} (j\le 8K+4)$$ $$d^1(v_2^{2s+1}\alpha^k i_{16K+9})=2v_2^{2s-2}\alpha^k i_{16K+9}$$

Thus $E^2$ is given by $$v_2^{2s}\alpha^k u \hspace{5mm} (k \geq 0), \hspace{5mm} v_2^{2s}u^j \hspace{5mm} (1\le j \leq 8K+4), \hspace{5mm}v_2^{2s+1}\alpha^k u^{8K+4} \hspace{5mm} (0\leq k),$$ $$v_2^{2s}\alpha^k i_{16K+9}$$ 

$d^2$ has odd degree 35. Since we have both odd and even degree elements in the $E^2$-term, $d^2$ might very well be non-trivial. If it is, then by naturality, it must have its source in the $RP^{16K+8}$ part and target in the $S^{16K+9}$ part. Also, the source cannot be anything from the $E^2$-term for the BSS for $RP^{\infty}$, for we know that $d^2$ is trivial there. Therefore the only possible sources are $v_2^{2s+1}\alpha^k u^{8K+4}$ with possible targets $v_2^{2s}\alpha^k i_{16K+9}.$

Now, since $v_2^2$ is a unit, if there is a $d^2$, it must be non-zero on $v_2^{-1}u^{8K+4}$ which has degree $+6-16(8K+4)$ which is $-10+16K$ mod 48. The degree of the target must be this plus 35. The possible targets have degrees $-12s-32k+16K+9$. The only solutions are $\alpha i_{16K+9}$, $\alpha^4 i_{16K+9}$ etc.

If $d^2$ is non-zero our guess would be $d^2(v_2^{-1}u^{8K+4})=\alpha i_{16K+9}$. Thus we need to show that $x^2\alpha i_{16K+9}=0$ in order for our guess to be correct.

The left column of (11) shows that the $ER(2)^*$-module $ER(2)^*(RP^{16K+10}/RP^{16K+8},*)$ is generated by two elements $z_{16K+10} = \rho^* i_{16K+10}$ and $z_{16K-8}$, where $i^*z_{16K-8}=xi^{16K+9}$.

We want to show that $x^2\alpha i_{16K+9}=0$ in $ER(2)^*(RP^{16K+9},*)$. This is the image of the same named element in $ER(2)^*(S^{16K+9},*)$ which lifts to $x\alpha z_{16K-8} \in ER(2)^*({RP^{16K+10} \over RP^{16K+8}})$. This maps to $x\alpha v_2^4u^{8K+5}$ in $ER(2)^*(RP^{16K+10},*)$ from \cite[13.1]{KW1}. Since $2x=0$, we can use the relation $[2](u) = 2u +_F \alpha u^2 +_F u^4 =0$ on $\alpha v_2^2$, and we get $$x\alpha v_2^4 u^{8K+5} =xv_2^4u^{8K+7} + \ldots$$ 
The least power of $u$ which is zero in $ER(2)^*(RP^{16K+10})$ is $8K+8$. We have noted that $u^{8K+7}=x^4z_{16K+4}$, so that we have $x^5v_2^4z_{16K+4}$, which is non-zero and does not lift to $S^{16K+10}$. It follows that $x^2\alpha i_{16K+9} \neq 0$.

However, if we multiply the whole calculation by $\alpha^3$, we get $\alpha^3 x^5 v_2^4 z_{16K+4}=0$, as $x^3\alpha=0$. So $x^2\alpha^4 i_{16K+9}=0$, and we conclude that $$d^2(v_2^{2s+1}\alpha^k u^{8K+4}) = v_2^{2s+2}\alpha ^{k+4} i_{16K+9}$$

Then, $E^3$ is given by $$v_2^{2s}\alpha ^k u \hspace{3mm} (0\leq k),\,\,\,v_2^{2s}u^j \hspace{3mm} (1\le j \leq 8K+4),\,\,\,v_2^{2s}\alpha^{\{0-3\}}i_{16K+9}.$$ 
 $d^3$ is even degree so the even and odd parts don't mix under the differential. On both parts we already know the $d^3$ differential: $$d^3(v_2^{\{6,2\}}\alpha^ku)=v_2^{\{0,4\}}\alpha^{k+1}u$$ $$d^3(v_2^{\{6,2\}}u^j)=v_2^{\{0,4\}}\alpha u^j =v_2^{\{0,4\}}u^{j+2} \hspace{5mm} (1\le j\leq 8K+2)$$
$$d^3(v_2^{4s-2}\alpha^{\{0-2\}}i_{16K+9})=v_2^{4s}\alpha^{\{1-3\}}i_{16K+9} $$ $$d^3(v_2^{4s}\alpha^{\{0-3\}}i_{16K+9}) =0$$
Thus $E^4$ is given by $$v_2^{\{0,4\}}u^{\{1-3\}},\hspace{5mm} v_2^{\{6,2\}}u^{{8K+3,8K+4}}, \hspace{5mm} v_2^{4s} i_{16K+9}. $$

$d^4$ has degree 21, which is odd. So it must go from the $RP^{16K+8}$ part to the $S^{16K+9}$ part. $d^4$ must be zero on anything in the image from $RP^{\infty}$. So our non-zero differentials have possible sources $v_2^{\{6,2\}}u^{\{8K+3,8K+4\}}$, and possible targets $v_2^{4s}i_{16K+9}$ and $v_2^{\{6,2\}}\alpha^3i_{16K+9}$.
Let's compute the degrees (mod 48).
$$v_2^6u^{8K+3}:-36-16(8K+3) \equiv -36+16K \longrightarrow -15+16K$$
$$v_2^2u^{8K+3}:-12-16(8K+3) \equiv -12+16K \longrightarrow 9+16K$$
$$v_2^6u^{8K+4}:-36-16(8K+4) \equiv -4+16K \longrightarrow 17+16K$$
$$v_2^2u^{8K+4}:-12-16(8K+4) \equiv 20+16K \longrightarrow 41+16K$$

Comparing with degrees of the possible targets we see that the only possible differentials are:
$$d^4(v_2^{\{6,2\}}u^{8K+3})=v_2^{\{0,4\}}i_{16K+9}$$
This must be true if $i_{16K+9}$ is $x^4$-torsion. We invoke the commutative diagram (11) used before. Consider $x^3z_{16K-8}$ in $ER(2)^*({RP^{16K+10} \over RP^{16K+8}})$. The following diagram shows its images in the lower left hand square of the commutative diagram.

$$\xymatrix{
x^3z_{16K-8} \ar@{|->}[d] \ar@{|->}[r] &x^3v_2^4u^{8K+5} \ar@{|->}[d]\\
x^4i_{16K+9} \ar@{|->}[r] &x^4i_{16K+9}\\
}$$
The element in the upper right-hand corner is zero since $x^3u^4=0$. This shows $i_{16K+9}$ is indeed $x^4$-torsion. We obtain our $E^5$-term

$$v_2^{\{0,4\}}u^{\{1-3\}},\hspace{3mm} v_2^{\{6,2\}}u^{8K+4}. $$ 
$d^5$ has even degree. For dimensional reasons the differentials must be zero in the odd part, and \cite[Theorem 13.2]{KW1} determines that it is zero for the even part.

\vspace{5mm}
$d^6$ has degree 7. Again, it must go from even part to odd part by naturality. By sparseness, $d^6 = 0$.

Since $d^7$ has even degree, the differential does not mix odd and even degrees. First of all in the even part we have $$d^7(v_2^4u^{\{1-3\}})=u^{\{1-3\}}$$ Also by mapping to $RP^{16K+8}$ \cite[page 23]{KW1} we get that $$d^7(v_2^6u^{8K+4})=v_2^2u^{8K+4}$$

We collect our results in the following theorem.

\vspace{7mm}
\begin{theorem} The Bockstein spectral sequence for $ER^*(RP^{16K+9},*)$ is as follows:

$E^1$ $$v_2^i\alpha^k u^j \hspace{5mm} (0\leq i \le 7, \hspace{5mm} 0\leq k, \hspace{5mm} 0\leq j \leq 8K+4)$$ $$v_2^i \alpha^k i_{16K+9} \hspace{5mm} (0\leq i \leq 7, \hspace{5mm} 0\leq k)$$
$$d^1(v_2^{2s-5}\alpha^ku^j)=2v_2^{2s}\alpha^ku^j = v_2^{2s}\alpha^{k+1}u^{j+1}+\ldots \hspace{5mm} (j\le 8K+3)$$ $$d^1(v_2^{2s+1}\alpha^ki_{16K+9})=2v_2^{2s-2}\alpha^ki_{16K+9}$$ where $v_2^i\alpha^ki_{16K+9}$ generates a free $\mathbb{Z}_{(2)}$-module in $E^1$.

\vspace{5mm}
$E^2$ 
 $$v_2^{2s}\alpha^k u \hspace{5mm} (k \geq 0), \hspace{5mm} v_2^{2s}u^j \hspace{5mm} (1\le j \leq 8K+4), \hspace{5mm}v_2^{2s+1}\alpha^k u^{8K+4} \hspace{5mm} (0 \leq k),$$ $$v_2^{2s}\alpha^k i_{16K+9}$$
 $$d^2(v_2^{2s+1}\alpha^k u^{8K+4}) = v_2^{2s+2}\alpha ^{k+4} i_{16K+9}$$

\vspace{5mm}
$E^3$  $$v_2^{2s}\alpha ^k u \hspace{5mm} (0\leq k),\,\,\,v_2^{2s}u^j \hspace{5mm} (1\le j \leq 8K+4),\,\,\,v_2^{2s}\alpha^{{0-3}}i_{16K+9}$$
 $$d^3(v_2^{\{6,2\}}\alpha^ku)=v_2^{\{0,4\}}\alpha^{k+1}u$$ $$d^3(v_2^{\{6,2\}}u^j)=v_2^{\{0,4\}}\alpha u^j =v_2^{\{0,4\}}u^{j+2} \hspace{5mm} (2\le j\leq 8K+2)$$
 $$d^3(v_2^{4s-2}\alpha^{\{0-2\}}i_{16K+9})=v_2^{4s}\alpha^{\{1-3\}}i_{16K+9} $$ $$d^3(v_2^{4s}\alpha^{\{0-3\}}i_{16K+9}) =0$$
 
\vspace{5mm}
$E^4$  $$v_2^{\{0,4\}}u^{\{1-3\}},\hspace{5mm} v_2^{\{6,2\}}u^{\{8K+3,8K+4\}},\hspace{5mm} v_2^{4s} i_{16K+9} $$
$$d^4(v_2^{\{6,2\}}u^{8K+3})=v_2^{\{0,4\}}i_{16K+9}$$

\vspace{5mm}
$E^5=E^6=E^7$ $$v_2^{\{0,4\}}u^{\{1-3\}},\hspace{3mm} v_2^{\{6,2\}}u^{8K+4},\hspace{3mm} v_2^{\{6,2\}}\alpha^3i_{16K+9}$$
$$d^7(v_2^4u^{\{1-3\}})=u^{\{1-3\}},\hspace{5mm} d^7(v_2^6u^{8K+4})=v_2^2u^{8K+4}$$

\end{theorem}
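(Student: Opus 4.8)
The plan is to build the whole spectral sequence by comparison, sandwiching $RP^{16K+9}$ between $RP^{16K+10}$ and $RP^{16K+8}$ through the commutative diagram (11) of long exact sequences. Since the $E^1$-term is just $E(2)^*(RP^{16K+9},*)$, and this splits as $E(2)^*(RP^{16K+8},*) \oplus E(2)^*(S^{16K+9},*)$ via the maps $RP^{16K+8} \to RP^{16K+9} \to S^{16K+9}$, I would organize the computation on these two summands: an ``even part'' carrying the classes $v_2^i\alpha^k u^j$ and an ``odd part'' carrying the sphere classes $v_2^i\alpha^k i_{16K+9}$. The task is then to determine how each differential acts on the two summands and, crucially, when it mixes them.

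The backbone of the argument is the degree count $\deg d^r = 17r+1$, so $d^r$ has even degree for $r$ odd and odd degree for $r$ even. Hence the odd-indexed differentials $d^1,d^3,d^5,d^7$ preserve the even/odd splitting, while the even-indexed $d^2,d^4,d^6$ must carry the even part into the odd part by naturality. I would proceed inductively in $r$. For $d^1$, $d^3$ (and the vanishing of $d^5$ on the even part), and finally $d^7$, the action on the $u$-powers is inherited from the already-known Bockstein spectral sequence for $RP^{\infty}$ in Theorem 2.3 restricted to $RP^{16K+8}$, while the action on the sphere classes is read off from the computation for $S^{16K+9}$ in Section 5 of \cite{KW1}. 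Naturality with respect to the vertical and horizontal maps of (11) guarantees compatibility, and sparseness kills $d^6$ outright.

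The heart of the proof, and the genuinely new input beyond the $RP^{\infty}$ case, lies in the two cross-differentials $d^2$ and $d^4$ linking the projective-space part to the sphere classes. For $d^2$ the point is to pin down the exact $x$-torsion of $\alpha^k i_{16K+9}$. I would show $x^2\alpha i_{16K+9}\neq 0$ but $x^2\alpha^4 i_{16K+9}=0$ by lifting along the left column of (11) to $x\alpha z_{16K-8}$ in $ER(2)^*(RP^{16K+10}/RP^{16K+8},*)$, pushing forward to $x\alpha v_2^4 u^{8K+5}$ in $ER(2)^*(RP^{16K+10})$, and then applying the relation $[2](u)=2u+_F \alpha u^2 +_F u^4 = 0$ together with $u^{8K+7}=x^4 z_{16K+4}$ to land on the nonzero class $x^5 v_2^4 z_{16K+4}$; multiplying through by $\alpha^3$ and invoking $x^3\alpha = 0$ then forces the vanishing, which is exactly the condition yielding $d^2(v_2^{2s+1}\alpha^k u^{8K+4}) = v_2^{2s+2}\alpha^{k+4} i_{16K+9}$. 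The $d^4$ differential similarly requires that $i_{16K+9}$ be $x^4$-torsion, which I would obtain from the same diagram by chasing $x^3 z_{16K-8}$ and using $x^3u^4=0$ to see that its image $x^4 i_{16K+9}$ vanishes.

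The main obstacle I expect is pinning down these cross-differentials precisely. The degree bookkeeping mod $48$ admits several a priori candidate targets (for $d^2$ the classes $\alpha i_{16K+9}, \alpha^4 i_{16K+9}, \dots$, and analogously for $d^4$), and the delicate feature is that the naive candidate $x^2\alpha i_{16K+9}$ is in fact nonzero, so the torsion bound must be pushed up to $\alpha^4$. Getting the multiplicities and the surviving $x$-torsion generators exactly right, rather than merely up to the ambiguity permitted by sparseness, is precisely where the careful diagram chase through (11) is indispensable.
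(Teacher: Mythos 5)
Your proposal is correct and follows essentially the same route as the paper: the sandwich via diagram (11), the even/odd splitting with the parity count $\deg d^r = 17r+1$ governing which differentials mix the summands, and the two decisive torsion computations (lifting to $x\alpha z_{16K-8}$, pushing to $x\alpha v_2^4u^{8K+5}$, invoking $[2](u)$ and $u^{8K+7}=x^4z_{16K+4}$ to see $x^2\alpha i_{16K+9}\neq 0$ while $x^2\alpha^4 i_{16K+9}=0$; and chasing $x^3z_{16K-8}$ with $x^3u^4=0$ for the $d^4$ statement) are exactly the arguments in the text. No substantive differences to report.
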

 
Next we identify all the elements in degree 8*.

\begin{theorem}
A 2-adic basis of $ER(2)^{8*}(RP^{16K+9},*)$ is given by the elements 
$$\alpha^ku^j, \,\, (k \geq 0,1\le j \le 8K+4)$$
$$v_2^4\alpha^k u^j, \,\, (k \geq 1, 1\leq j \leq 8K+4)$$
$$v_2^4u^j,\,\,(4 \leq j \leq 8K+4)$$
$$x \alpha^ki_{16K+9}, \,\, xv_2^4\alpha^k i_{16K+9},\,\,(k \geq 0)$$
\end{theorem}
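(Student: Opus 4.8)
\medskip
\noindent\emph{Proof strategy.} The plan is to read $ER(2)^*(RP^{16K+9},*)$ off the Bockstein spectral sequence of Theorem 3.1 by unfiltering, exactly as $ER(2)^*$ itself was assembled in Section~2, and then to isolate the degree $8*$ part by a degree count. That spectral sequence determines $M=ER(2)^*(RP^{16K+9},*)/(x)$, which is spanned by the targets of $d^1,d^2,d^3,d^4,d^7$; a target hit by $d^r$ is an $x^r$-torsion generator, i.e.\ lies in $M_r\setminus M_{r-1}$. To recover $ER(2)^*(RP^{16K+9},*)$ I would lift each generator $\bar z$ and adjoin the tower $z,xz,\dots,x^{r-1}z$ with $x^rz=0$. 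Writing $\rho$ for the map to $E(2)^*(RP^{16K+9},*)$, we have $\rho(x)=0$, so $M\cong\operatorname{im}\rho$ injects into $E(2)^*$ while every $x$-multiple lies in $\ker\rho$. Thus a $2$-adic basis in degree $8*$ consists of the generators of $M$ lying in degree $8*$ together with the multiples $x^az$ ($a\ge1$) that land in degree $8*$, all governed by the congruences $|x|\equiv-1$, $|v_2|\equiv2$, $|\alpha|\equiv|u|\equiv0$, $|i_{16K+9}|\equiv1 \pmod{8}$.

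For the generators of $M$ (the first three families) an even monomial $v_2^i\alpha^ku^j$ has degree $\equiv2i\pmod{8}$, hence lies in degree $8*$ exactly when $i\in\{0,4\}$. Reading the differential-targets off Theorem 3.1, the classes of $M$ with $v_2$-exponent $0$ or $4$ are $\alpha^ku^j$ ($k\ge0$, $1\le j\le8K+4$), $v_2^4\alpha^ku^j$ ($k\ge1$, $1\le j\le8K+4$), and $v_2^4u^j$ ($4\le j\le8K+4$); the only such monomials absent are $v_2^4u^1,v_2^4u^2,v_2^4u^3$, and these are missing precisely because they are the sources of $d^7$ and so are not in $M$. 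Since $\rho$ is injective on $M$, these three families map injectively into $E(2)^{8*}(RP^{16K+9},*)$, giving the first three families of the statement.

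For the $x$-torsion carried by the top cell (the fourth family) I would run the degree count on the sphere classes. A monomial $x^av_2^i\alpha^ki_{16K+9}$ has degree $\equiv7a+2i+1\pmod{8}$, so it lies in degree $8*$ iff $a\equiv2i+1\pmod{8}$. The relevant generators $v_2^i\alpha^ki_{16K+9}$ are hit only by $d^1,d^2,d^3,d^4$, so their towers reach at most $x^3$; within $1\le a\le3$ the congruence forces $a=1$ and $i\in\{0,4\}$. Moreover $\alpha^ki_{16K+9}$ and $v_2^4\alpha^ki_{16K+9}$ have $x$-order at least $2$ for every $k\ge0$ (order $4$ when $k=0$, order $3$ when $k=1,2,3$, order $2$ when $k\ge4$), so $x$ times each is nonzero. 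This yields exactly $x\alpha^ki_{16K+9}$ and $xv_2^4\alpha^ki_{16K+9}$, all lying in $\ker\rho$.

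The step I expect to be the real obstacle is controlling the $x$-power towers on the top-dimensional even classes and matching them against the sphere family, so that the list is neither short nor redundant. Two phenomena must be pinned down: the $d^7$-target $v_2^2u^{8K+4}$ is an $x^7$-torsion generator, so a priori it produces high $x$-multiples, and a degree count ($7\cdot4+2\cdot2\equiv0\pmod{8}$) puts $x^4v_2^2u^{8K+4}$ back into degree $8*$; and, as in Proposition 3.1, the class $u^{8K+5}$ is a nonzero $x$-torsion element of $ER(2)^*(RP^{16K+9})$ sitting in degree $8*$ although it vanishes in $E(2)^*$. Neither appears literally among the four families, so the heart of the argument is to show, using the top-cell relations~(10) and the commutative diagram~(11) together with $[2](u)=2u+_F\alpha u^2+_F u^4$ and $x^3\alpha=0$, that every such top-degree $x$-torsion class is already expressed through the sphere generators $x\alpha^ki_{16K+9}$, $xv_2^4\alpha^ki_{16K+9}$ (equivalently, that no further independent generators arise). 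Once this reconciliation is carried out, the four families are seen to be a complete and independent $2$-adic basis.
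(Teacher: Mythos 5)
Your strategy is the same as the paper's: read $M=ER(2)^*(RP^{16K+9},*)/(x)$ off the Bockstein spectral sequence of Theorem 3.1 as the span of differential targets, adjoin the $x$-towers of length equal to the torsion order, and isolate degree $8*$ by reducing all degrees mod $8$. Your degree bookkeeping is correct and in fact considerably more explicit than the paper's three-sentence proof: the identification of the even part of $M^{8*}$ with the first three families (with $v_2^4u^{\{1-3\}}$ absent because they are $d^7$-sources), and the computation that the sphere classes contribute exactly $x\alpha^k i_{16K+9}$ and $xv_2^4\alpha^k i_{16K+9}$ (with $x$-orders $4,3,3,3,2,2,\dots$ as $k$ grows, so that the single $x$-multiple is always nonzero), both match what the paper asserts.

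However, you have correctly located, and then left open, the one step that actually requires an argument, so as written the proof is incomplete. The issue is exactly the one you name: the $d^7$-target $v_2^2u^{8K+4}$ sits in degree $\equiv 4 \pmod 8$ and is an $x^7$-torsion generator, so $x^4$ times its lift lands in degree $8*$ and is a priori nonzero, as is $u^{8K+5}$ (nonzero by Proposition 3.1 but invisible in $E(2)^*(RP^{16K+9},*)$); neither appears literally in the list, so you must show both lie in the span of the fourth family. The paper disposes of this with a one-line appeal to the analogous computation in \cite{KW2}, but the concrete resolution is available inside this paper: since these classes are killed by $\rho$ and live in the image of $ER(2)^*(S^{16K+9},*)\to ER(2)^*(RP^{16K+9},*)$ (compare the bottom row of diagram (11) after restricting to $RP^{16K+8}$, where the corresponding top powers of $u$ vanish), they are forced to be combinations of the $x^a v_2^i\alpha^k i_{16K+9}$; explicitly one has $xv_2^4 i_{16K+9}=u^{8K+5}$ and $xi_{16K+9}=v_2^4u^{8K+5}$, the identities the paper uses without comment in Section 4.1, and the higher powers $u^{8K+6}, u^{8K+7}$ and $x^4$ times the lift of $v_2^2u^{8K+4}$ are then absorbed the same way via the analogue of relations (10). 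You should add this reconciliation (or at least the restriction-to-$RP^{16K+8}$ argument showing these classes come from the top cell); with it, your proof is complete and is a fuller version of the paper's.
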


\begin{proof} The first classes of elements represent the images of differentials in the spectral sequence that do not involve $i_{16K+9}$. As in \cite{KW2}, multiplication by powers of $x$ leads to no new elements in degree $8*$. Those images involving $i_{16K+9}$ provide $x^2$,$x^3$, or $x^3$-torsion, which may be multiplied by $x$. \end{proof}

\begin{cor}
There is an algebraic map $$ER(2)^{8*}(RP^{16K+9}) \rightarrow E(2)^{8*}(RP^{16K+10})$$ which only misses the elements $v_2^4u^{\{1-3\}}$.
\end{cor}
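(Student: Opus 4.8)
The plan is to exhibit the map explicitly on $2$-adic bases, in the spirit of Corollary 2.1 (the $RP^\infty$ case of \cite{KW2}), rather than to produce it from a space-level map (there is none, since $RP^{16K+9}\subset RP^{16K+10}$ induces restriction in the wrong direction). First I would pin down the target. Since $E(2)$ is complex oriented with $E(2)^*(RP^{16K+10})=E(2)^*[u]/(u^{8K+6},[2](u))$, and since $|v_2|\equiv-6$ while $|\alpha|,|u|\equiv0\pmod 8$, the condition that the total degree be $\equiv 0\pmod 8$ forces the power of $v_2$ to be $0$ or $4$. Hence a $2$-adic basis of $E(2)^{8*}(RP^{16K+10})$ is $\{\alpha^k u^j,\ v_2^4\alpha^k u^j : k\ge 0,\ 1\le j\le 8K+5\}$. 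Comparing with the basis of Theorem 3.2, the target carries exactly two extra families, the top classes $\alpha^k u^{8K+5}$ and $v_2^4\alpha^k u^{8K+5}$; these are where the $x$-torsion generators of the source must be sent.

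Next I would define the map on the basis of Theorem 3.2. On the generators not involving $i_{16K+9}$ the assignment is the obvious one: $\alpha^k u^j\mapsto\alpha^k u^j$, $v_2^4\alpha^k u^j\mapsto v_2^4\alpha^k u^j$ (for $k\ge1$), and $v_2^4u^j\mapsto v_2^4u^j$ (for $4\le j\le 8K+4$). Each right-hand side is a well-defined nonzero class of $E(2)^{8*}(RP^{16K+10})$ because $j\le 8K+4<8K+6$, and it restricts along $RP^{16K+9}\hookrightarrow RP^{16K+10}$ to the $\rho$-image of the corresponding $ER(2)$-class. The substantive choice is for the $x$-torsion generators. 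A degree count mod $48$ gives $|x\alpha^k i_{16K+9}|=16K-8-32k\equiv|v_2^4\alpha^k u^{8K+5}|$ and $|xv_2^4\alpha^k i_{16K+9}|\equiv|\alpha^k u^{8K+5}|$, so the degree-preserving assignment is $x\alpha^k i_{16K+9}\mapsto v_2^4\alpha^k u^{8K+5}$ and $xv_2^4\alpha^k i_{16K+9}\mapsto\alpha^k u^{8K+5}$.

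I would justify this last choice, rather than sending the $x$-torsion classes to zero as a literal $\rho$ would, by appealing to Proposition 3.1: the top class $u^{8K+5}$ is a nonzero $x$-torsion element of $ER(2)^*(RP^{16K+9})$, invisible to $\rho$ but genuinely present, and the commutative diagram (11) shows that it is precisely the $i_{16K+9}$-generators that transport this top-cell information through the cofibre sequence $S^{16K+9}\to RP^{16K+9}\to\cdots$ (recall $i^*z_{16K-8}=xi_{16K+9}$ and $z_{16K-8}\mapsto xv_2^4u^{8K+5}$). Thus matching the $i_{16K+9}$-families to the top classes of $RP^{16K+10}$ is the algebraically natural completion of $\rho$.

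Finally I would read off the image. Since the listed targets are pairwise distinct basis vectors, the map is injective; uniting the two groups of assignments shows the image contains every $\alpha^k u^j$ with $1\le j\le 8K+5$, every $v_2^4\alpha^k u^j$ with $k\ge1$, and $v_2^4u^j$ for $4\le j\le 8K+5$. The only basis vectors of $E(2)^{8*}(RP^{16K+10})$ not in the image are $v_2^4u^1,v_2^4u^2,v_2^4u^3$, that is $v_2^4u^{\{1-3\}}$, which is the claim. I expect the main obstacle to be this middle step: verifying, via Proposition 3.1 and the identifications in diagram (11), that the $x$-torsion generators really meet the top classes $\alpha^k u^{8K+5}$ and $v_2^4\alpha^k u^{8K+5}$, and doing the mod-$48$ degree bookkeeping carefully enough to decide which of the two top families each generator hits.
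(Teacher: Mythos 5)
Your proposal is correct and matches the paper's (implicit) argument: the corollary is stated without proof as a direct comparison of the $2$-adic basis of Theorem 3.3 with the basis $\{\alpha^k u^j,\ v_2^4\alpha^k u^j\}$ of $E(2)^{8*}(RP^{16K+10})$, with the $x$-torsion families $x\alpha^k i_{16K+9}$ and $xv_2^4\alpha^k i_{16K+9}$ accounting for the top classes $v_2^4\alpha^k u^{8K+5}$ and $\alpha^k u^{8K+5}$, exactly as you describe. Your mod-$48$ degree checks and the appeal to Proposition 3.1 and diagram (11) are consistent with the paper's identifications (e.g.\ $i^*z_{16K-8}=xi_{16K+9}$ with $z_{16K-8}\mapsto v_2^4u^{8K+5}$), so the only missed basis elements are indeed $v_2^4u^{\{1-3\}}$.
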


\section{Non-Immersions}

If $RP^b$ immerses in $\mathbb{R}^c$, James showed \cite{James} that there is an axial map
$$m: RP^b \times RP^{2^L-c-2} \rightarrow RP^{2^L-b-2}$$ for large $L$ (meaning a map that is non-trivial on both axes). Specifically, to show that $RP^{2n}$ does not immerse in $\mathbb{R}^{2k+1}$ we need to prove there is no axial map 
\begin{equation}
m: RP^{2n} \times RP^{2^L-2k-3} \rightarrow RP^{2^L-2n-2}
\end{equation}

Our strategy is to consider the class $u \in ER(2)^*(RP^{2^L-2n-2})$, which satisfies $u^{2^{L-1}-n}=0$ when $n\equiv$0 or 7 mod 8 \cite[Theorem 1.6]{KW1}. We shall see that $m^*u$ is known, in principle. If we can show that $(m^*u)^{2^{L-1}-n} \neq 0$, we have a contradiction.

Davis \cite{Davis84} used this approach, by using the complex-oriented cohomology theory $E(2)$ to deduce that $RP^{2n}$ does not immerse in $\mathbb{R}^{2k}$ by showing there is no axial map
$$m: RP^{2n} \times RP^{2^L-2k-2} \rightarrow RP^{2^L-2n-2}$$ when $n = m+ \alpha(m) -1$ and $k = 2m-\alpha(m)$ for some $m$, where $\alpha(m)$ denotes the number of 1's in the binary expansion of $m$. We wish to improve this result to show that for certain $n$ and $k$, (12) does not exist.

\vspace{3mm}
There is an axial map $m: RP^{\infty} \times RP^{\infty} \rightarrow RP^{\infty}$, which is the restriction of the map $\mathbb{C}P^{\infty} \times \mathbb{C}P^{\infty} \rightarrow \mathbb{C}P^{\infty}$ induced by the tensor product of the canonical Real line bundles. Therefore $m^*u = u_1 +_F u_2$, where $u_1,u_2$ and $u$ are the Chern classes for the three copies of $RP^{\infty}$. 

If $m:RP^b \times RP^c \rightarrow RP^d$ is an axial map, the diagram 
$$\xymatrix{
RP^b \times RP^c \ar[r]^m \ar[d]^{\subset} &RP^d \ar[d]^{\subset}\\
RP^{\infty} \times RP^{\infty} \ar[r]^m &RP^{\infty}
}$$
commutes, as all axial maps $RP^b \times RP^c \rightarrow RP^{\infty}$ are homotopic. It follows that the same formula $m^*u = u_1 +_F u_2$ holds for this $m$. As the formal group law $F$ is a formal power series in $u_1$ and $u_2$ over $\mathbb{Z}_{(2)}[\alpha]$ and $\deg u = -16$ and $\deg \alpha =16$, we are interested only in degrees that are multiples of 16. This simplifies our work, as $ER(2)^{16*}(RP^{\infty}) \rightarrow E(2)^{16*}(RP^{\infty})$ is an isomorphism by \cite{KW1}.

\vspace{3mm}
We assume $k = 2$ mod 8, so that  $2^L-2k-2 = 16K+10$ and we can use Theorem 3.2. Consider the diagram
$$\xymatrix{
ER(2)^{16*}(RP^{\infty} \times RP^{\infty}) \ar[r] \ar[d] &E(2)^{16*}(RP^{\infty} \times RP^{\infty}) \ar[d]\\
ER(2)^{16*}(RP^{2n} \times RP^{16K+10}) \ar[r] \ar[d] &E(2)^{16*}(RP^{2n} \times RP^{16K+10}) \ar[d]\\
ER(2)^{16*}(RP^{2n} \times RP^{16K+9}) \ar[r] &E(2)^{16*}(RP^{2n} \times RP^{16K+9})\\
}$$

From Don Davis's work, the image of $(u_1 +_F u_2)^{2^{L-1}-n} \in ER(2)^{16*}(RP^{\infty} \times RP^{\infty})$ in $E(2)^{16*}(RP^{2n} \times RP^{16K+10})$ is non-zero. We need to show that the image in $ER(2)^{16*}(RP^{2n} \times RP^{16K+9})$ is non-zero, (Note that we cannot use $E(2)$-cohomology for this purpose, as it is complex-oriented, which implies that $u^{8K+5} \in E(2)^*(RP^{16K+10})$ maps to zero in $E(2)^*(RP^{16K+9})$.

The two end terms in $(u_1 +_F u_2)^{2^{L-1}-n}$ are $u_1^{2^{L-1}-n}$ and $u_2^{2^{L-1}-n}$, which are plainly zero; all the other terms have the form $\lambda \alpha^ku_1^i u_2^j$, where $\lambda \in \mathbb{Z}_{(2)}, k \geq 0, i \geq 1, j \geq 1,$ and $i+j \geq 2^{L-1}-n$. Following \cite{KW1}, we may use the formulae $$2u_1=-\alpha u_1^2 + \ldots, \,\, \alpha u_1u_2^2 = \alpha u_1^2u_2 + \ldots $$ and induction to reduce $(u_1 +_F u_2)^{2^{L-1}-n}$ to a sum of distinct terms of the forms $\alpha^ku_1^iu_2$ and $u_1^iu_2^j$, with no numerical cofficient. (The first formula comes from $[2](u_1)=0$; the second from $u_1[2](u_2)- u_2[2](u_1)=0$.) Further, again by \cite{KW1}, $u_1^{n+1}=0$ since $n \equiv 0$ or 7 mod 8. Then $\alpha ^k u_1^i u_2=0$, since we still have $i+1 \geq 2^{L-1}-n.$ We do not know (or need to know) exactly which terms are present; all we have to do is show that the monomials $u_1^i u_2^j$ (for $1 \leq i \leq n$ and $1 \leq j \leq 8K+5$) remain linearly independent in $ER(2)^*(RP^{2n} \times RP^{16K+9})$, which we defer to the next section.  

\vspace{3mm}
Meanwhile, let us review the various numerical conditions. We need $n=m+\alpha(m)-1$, $k=2m-\alpha(m)$,$k\equiv 2$ mod 8 and $n \equiv$ 0 or 7 mod 8. So $2m -\alpha (m)\equiv 2$ and $m + \alpha(m) \equiv$ 0 or 1. Solving these, we get $(m,\alpha(m)) \equiv$ (6,2) or (1,0).

\vspace{5mm}
\begin{theorem}
If $(m,\alpha(m)) \equiv$ (6,2) or (1,0) mod 8, 

$RP^{2(m+\alpha(m)-1)}$ does not immerse in $\mathbb{R}^{2(2m -\alpha(m))+1}$.
\end{theorem}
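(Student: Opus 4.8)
The plan is to argue by contradiction, combining James's axial map with the survival result of Proposition 3.1. Suppose $RP^{2n}$ immerses in $\mathbb{R}^{2k+1}$, where $n=m+\alpha(m)-1$ and $k=2m-\alpha(m)$. First I would settle the arithmetic: the hypothesis $(m,\alpha(m))\equiv(6,2)$ or $(1,0)\bmod 8$ gives $k=2m-\alpha(m)\equiv 2\bmod 8$ and $n=m+\alpha(m)-1\equiv 7$ or $0\bmod 8$. The first congruence is exactly what lets me write $2^L-2k-3=16K+9$, so that James \cite{James} produces an axial map $\phi\colon RP^{2n}\times RP^{16K+9}\to RP^{2^L-2n-2}$ to which Theorem 3.2 applies; the second congruence is precisely the hypothesis of \cite[Theorem 1.6]{KW1} under which $u^{2^{L-1}-n}=0$ in $ER(2)^*(RP^{2^L-2n-2})$. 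A contradiction will therefore follow once I show that the class $\phi^*u=u_1+_F u_2$ satisfies $(u_1+_F u_2)^{2^{L-1}-n}\neq 0$ in $ER(2)^{16*}(RP^{2n}\times RP^{16K+9})$.

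The reduction of this power is already carried out above: using $2u_1=-\alpha u_1^2+\cdots$ and $\alpha u_1u_2^2=\alpha u_1^2u_2+\cdots$ together with $u_1^{n+1}=0$, the expression collapses to a $\mathbb{Z}/2$-linear combination of the distinct monomials $u_1^iu_2^j$ with $1\le i\le n$ and $1\le j\le 8K+5$, the terms $\alpha^k u_1^iu_2$ having dropped out. Hence the entire theorem rests on the single deferred claim that these monomials $u_1^iu_2^j$ are linearly independent over $\mathbb{Z}/2$ in $ER(2)^{16*}(RP^{2n}\times RP^{16K+9})$.

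To establish that independence I would push the classes into ordinary $E(2)$-cohomology, where Davis \cite{Davis84} already has the answer --- but over $RP^{16K+10}$ rather than $RP^{16K+9}$, since $E(2)$ is complex oriented and so kills $u^{8K+5}$ on the $9$-dimensional space. Concretely I want an algebraic comparison homomorphism
\[
\Phi\colon ER(2)^{16*}(RP^{2n}\times RP^{16K+9})\longrightarrow E(2)^{16*}(RP^{2n}\times RP^{16K+10})
\]
sending $u_1^iu_2^j\mapsto u_1^iu_2^j$ over the whole range. On the first factor the map $ER(2)^{16*}(RP^{2n})\to E(2)^{16*}(RP^{2n})$ is an isomorphism (the $16*$ isomorphism for $RP^{\infty}$ noted above, restricted to the even skeleton), while on the second factor the algebraic map of Corollary 3.3 carries $u_2^j$ to $u_2^j$ for all $1\le j\le 8K+5$ --- in particular the top power $u_2^{8K+5}$ is genuinely present, by Proposition 3.1, exactly where $E(2)$-cohomology of $RP^{16K+9}$ would have lost it. Assembling these factorwise and checking compatibility with products yields $\Phi$; since Davis shows the $u_1^iu_2^j$ are independent in the target, their preimages are independent in the source.

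The hard part will be constructing $\Phi$ and verifying that it acts as claimed on the products $u_1^iu_2^j$. The obstruction is that $ER(2)$ has no K\"unneth isomorphism --- it is not complex oriented --- so $ER(2)^{16*}(RP^{2n}\times RP^{16K+9})$ is not the tensor product of its factors, and one cannot simply multiply the two factorwise comparison maps. I expect to control this by running the Bockstein spectral sequence of Theorem 3.2 for the product (exploiting the $ER(2)^*$-action and the differentials already computed on each axis) only far enough to see that, in total degree $16*$, no differential can mix or annihilate the classes $u_1^iu_2^j$; equivalently, to show that each is a permanent cycle representing a distinct $x$-torsion generator. The decisive input throughout is Proposition 3.1: the survival of $u^{8K+5}$ to $RP^{16K+9}$ is what allows the $ER(2)$-argument to succeed precisely where the complex-oriented $E(2)$-argument fails. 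Once $\Phi$ and the independence are in hand, $(\phi^*u)^{2^{L-1}-n}\neq 0$ contradicts $u^{2^{L-1}-n}=0$ on $RP^{2^L-2n-2}$, proving the non-immersion.
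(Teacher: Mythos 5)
Your setup --- the contradiction via James's axial map, the congruence arithmetic giving $k\equiv 2$ and $n\equiv 0$ or $7 \bmod 8$, and the reduction of $(u_1+_F u_2)^{2^{L-1}-n}$ to the monomials $u_1^iu_2^j$ --- matches the paper exactly. The problem is the deferred independence claim, which is where all the real work lives, and your proposal does not actually prove it. The comparison map $\Phi$ you want to assemble factorwise does not exist as described: Corollary 3.3 is an \emph{algebraic} map, not one induced by a map of spaces, and since $ER(2)$ is not complex oriented there is no K\"unneth isomorphism with which to multiply the two factorwise comparisons. You concede this yourself and fall back on ``running the Bockstein spectral sequence for the product far enough to see that no differential can mix or annihilate the classes,'' but that is a statement of intent, not an argument --- and it is precisely the computation the paper carries out in Section 4.1, with specific external inputs you have not identified.

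The decisive point your proposal never touches is the following. The only genuinely endangered monomial is $u_1^nu_2^{8K+5}$ (those with $j\le 8K+4$ are already controlled by Davis in $E(2)$-cohomology of the even skeleton). In the Bockstein spectral sequence for $ER(2)^*(RP^{2n}\wedge RP^{16K+9},*)$ the $E^1$-term is \emph{not} a tensor product: by \cite{GW} it acquires an extra summand $\Sigma^{16K-17}E(2)^*(RP^{2n},*)$ with basis elements $\alpha^ku_1^iz_{16K-33}$, and the class $u_1^nu_2^{8K+5}$ is represented by $xv_2^4u_1^ni_{16K+9}$. One must show that $u_1^ni_{16K+9}$ is not a $d^1$-boundary; for degree reasons the only possible sources are exactly the classes $\alpha^ku_1^iz_{16K-33}$, and ruling them out requires the inputs $d^1(z_{16K-1})=0$ from \cite[Theorem 19.2]{KW1} together with the identification of $z_{16K-1}$ with $u_1z_{16K-33}$ from \cite[Theorem 1.2]{GW}, followed by the observation that elements killed by $u_1$ die under restriction to a smaller skeleton. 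Without this step the theorem is unproved; with it, your map $\Phi$ becomes unnecessary.
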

 
\subsection{Products with an odd space}

As we discussed towards the end of the previous section, in order to complete the proof of theorem 4.1 we need to show that the monomials $u_1^iu_2^j$ (for $1 \leq i \leq n$ and $1 \leq j \leq 8K+5$) remain linearly independent in  $ER(2)^*(RP^{2n} \times RP^{16K+9})$. The argument presented in this section is similar to \cite[section 11]{KW2}.

We shall look into the Bockstein spectral sequence for $$ER(2)^*(RP^{2n} \wedge RP^{16K+9},*)$$ where $2n < 16K+9$.

The $E^1$-term is the usual $$E(2)^*(RP^{2n} \wedge RP^{16K+9},*)$$
$$\simeq E(2)^*(RP^{2n},*) \otimes E(2)^*(RP^{16K+9},*) \oplus \Sigma^{-16(8K+4)-1}E(2)^*(RP^{2n},*)$$
(from \cite{GW})
Also, we know that $$E(2)^*(RP^{16K+9},*) \cong E(2)^*(RP^{16K+8},*)\oplus E(2)^*(S^{16K+9},*).$$

Since $E(2)^*(S^{16K+9},*)$ is free it does not affect the Tor term, only the tensor product. So our $E^1$-term is:
$$E(2)^*(RP^{2n},*)\otimes E(2)^*(RP^{16K+8},*) \oplus E(2)^*(RP^{2n},*) \otimes E(2)^*(S^{16K+9},*)$$
$$\oplus \Sigma^{16K-17}E(2)^*(RP^{2n},*)$$
A 2-adic basis for this is given by 
$$v_2^s\alpha^k u_1^iu_2 \hspace{5mm} (0\leq k, \hspace{5mm} 0<i\le n, \hspace{5mm} s <8 )$$
$$v_2^su_1^iu_2^j \hspace{5mm} (0<i \le n, \hspace{5mm} 1 <j \le 8K+4, \hspace{5mm} s<8) $$
by the same reduction as before, and
$$v_2^s\alpha^k i_{16K+9} \hspace{5mm} (0\leq k, \hspace{5mm} 0 <i \le n, \hspace{5mm} s <8)$$
$$v_2^s\alpha^ku_1^iz_{16K-33} \hspace{5mm} (0\le k, \hspace{5mm} 0\le i < n, \hspace{5mm} s <8).$$

We know that $xi_{16K+9}$ represents $v_2^4u_2^{8K+4}$. So $v_2^4xu_1^ni_{16K+9}$ represents $u_1^nu_2^{8K+5}$. There is no differential on $u_1^ni_{16K+9}$. Also there is no differential on $v_2^4u_1^ni_{16K+9}$. All we have to do is show that $u_1^ni_{16K+9}$ is not in the image of $d^1$. Since $d^1$ has even degree we only have to worry about the odd degree elements since $u_1^ni_{16K+9}$ is odd degree.

$d^1$ has degree 18 so if it is to hit $u_1^ni_{16K+9}$ it must start at some $\alpha^k u_1^i z_{16K-33}$ because they are the only elements in the correct degree modulo 16. Then we would have $d^1$ non-trivial on $z_{16K-33}$.

In the Bockstein spectral sequence for $ER(2)^*(RP^{16M+16} \wedge  RP^{16K+10},*)$, $8M+8<8K+5$, we have from \cite[Theorem 19.2]{KW1}, $d^1(z_{16K-1})=0$. From \cite[Theorem 1.2]{GW} we have that $z_{16K-1}$ maps to $u_1z_{16K-33}$ in the spectral sequence for $ER(2)^*(RP^{16K+16} \wedge RP^{16K+10},*)$. Since this passes through the spectral sequence for $ER(2)^*(RP^{16M+16} \wedge RP^{16K+9},*)$, $z_{16K-1}$ maps to $u_1z_{16K-33}$ here as well. So $d^1(u_1z_{16K-33})= u_1d^1(z_{16K-33})=0$. 

$$\xymatrix{
ER(2)^*(RP^{16K+16} \wedge RP^{16K+10}) \ar[d] \ar[r] &ER(2)^*(RP^{16M+16} \wedge RP^{16K+8}) \\
ER(2)^*(RP^{16M+16} \wedge RP^{16K+9}) \ar[ur]\\
}$$

All elements killed by multiplication by $u_1$ go to zero under the map to $RP^{16K} \wedge RP^{16K+9}$, so our $d^1(z_{16K-33})$ is zero.

\begin{theorem}
When $n \le 8M < 8M+8 < 8K+5$, in $$ER(2)^{16*}(RP^{2n} \wedge RP^{16K+9})$$ the element $u_1^nu_2^{8K+5}$ is non-zero.
\end{theorem}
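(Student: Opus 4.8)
The plan is to determine the fate of $u_1^nu_2^{8K+5}$ directly from the Bockstein spectral sequence for $ER(2)^*(RP^{2n}\wedge RP^{16K+9},*)$. First I would pin down the class that detects it. The single-space computation records that $xi_{16K+9}$ represents $v_2^4u_2^{8K+4}$; smashing in the $RP^{2n}$-factor and multiplying by $v_2^4u_1^n$ then shows that $u_1^nu_2^{8K+5}$ is represented by $v_2^4xu_1^ni_{16K+9}$. Hence it suffices to prove that $u_1^ni_{16K+9}$ defines a nonzero class at $E^\infty$: once $u_1^ni_{16K+9}$ (equivalently $v_2^4u_1^ni_{16K+9}$) survives as a nonzero permanent cycle, the product $v_2^4xu_1^ni_{16K+9}$ is a nonzero $x$-torsion element, so $u_1^nu_2^{8K+5}\neq 0$.

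The next step is to verify that $u_1^ni_{16K+9}$ survives. It carries no outgoing differential, the possible targets being ruled out by the Künneth decomposition together with the single-space answer, so the only danger is that it is a boundary. Here a parity count does the work. The differential $d^1$ raises total degree by $18$, an even amount, while $u_1^ni_{16K+9}$ lies in odd total degree in the free sphere summand of the Künneth splitting; any $d^1$ hitting it must therefore originate in the odd summand $\Sigma^{16K-17}E(2)^*(RP^{2n},*)$, whose $2$-adic basis consists of the classes $\alpha^ku_1^iz_{16K-33}$. Matching degrees modulo $16$ leaves $z_{16K-33}$ as the only conceivable source, and the higher incoming differentials are excluded by the same structural constraints. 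Thus the whole problem collapses to the single claim $d^1(z_{16K-33})=0$.

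Establishing this vanishing is the main obstacle, and I would approach it by naturality rather than by a frontal calculation, since $z_{16K-33}$ is hard to control in isolation. Under the hypotheses $n\le 8M$ and $8M+8<8K+5$ the factor $RP^{2n}$ embeds in $RP^{16M+16}$, and I would compare with the larger smash $RP^{16K+16}\wedge RP^{16K+10}$, where the corresponding generator $z_{16K-1}$ is a $d^1$-cycle by \cite[Theorem 19.2]{KW1}. By the product formula of \cite[Theorem 1.2]{GW}, $z_{16K-1}$ is carried to $u_1z_{16K-33}$, and since this comparison factors through the spectral sequence for $RP^{16M+16}\wedge RP^{16K+9}$, naturality of the Bockstein differential gives $0=d^1(u_1z_{16K-33})=u_1\,d^1(z_{16K-33})$; that is, $d^1(z_{16K-33})$ is annihilated by $u_1$. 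A further comparison with $RP^{16K}\wedge RP^{16K+9}$, under which every class killed by $u_1$ vanishes, then forces $d^1(z_{16K-33})=0$ on the nose. Feeding this back through the two reductions shows that $u_1^ni_{16K+9}$ survives, and therefore $u_1^nu_2^{8K+5}\neq 0$. The delicate points are exactly the degree bookkeeping that isolates $z_{16K-33}$ as the unique possible $d^1$-source and the chain of naturality maps that transports the cycle condition down to the case at hand.
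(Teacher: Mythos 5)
Your proposal is correct and follows essentially the same route as the paper: reduce to showing $u_1^ni_{16K+9}$ is not a $d^1$-boundary, isolate $\alpha^ku_1^iz_{16K-33}$ as the only possible sources by parity and degree counting, and then kill $d^1(z_{16K-33})$ by the same chain of naturality comparisons through $RP^{16M+16}\wedge RP^{16K+10}$ using \cite[Theorem 19.2]{KW1} and \cite[Theorem 1.2]{GW}. No substantive difference from the paper's argument.
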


\addcontentsline{toc}{section}{References}

\vspace{5mm}

{\em Address:}\\
\texttt{A360 School of Mathematics,}\\
\texttt{Tata Institute of Fundamental Research,}\\
\texttt{1 Homi Bhabha Road,}\\ 
\texttt{Mumbai, India 400005}\\
\texttt{banerjee@math.tifr.res.in}

\end{document}